\definecolor{darkolivegreen}{rgb}{0.33, 0.42, 0.18}
\numberwithin{equation}{section}
\newtheorem{theorem}{Theorem}[section]
\theoremstyle{plain}
\newtheorem{lemma}[theorem]{Lemma}
\theoremstyle{plain}
\theoremstyle{plain}
\theoremstyle{definition}
\newcommand{\N}{{\mathbb N}}
\newcommand{\R}{{\mathbb R}}
\newcommand{\eps}{\varepsilon}
\newcommand{\beq}{\begin{equation}}
\newcommand{\eeq}{\end{equation}}
\renewcommand{\le}{\leqslant}
\renewcommand{\ge}{\geqslant}
\newcommand{\lk}{\mathcal{L}_K}
\newcommand{\g}{\mathcal{G}}
\newcommand{\gc}{\overline{\mathcal{G}}}
\newcommand{\wc}{\overset{*}{\rightharpoonup}}
\newcommand{\w}{W^{s,p}_0(\Omega)}
\def\XXint#1#2#3{{\setbox0=\hbox{$#1{#2#3}{\int}$ }
\vcenter{\hbox{$#2#3$ }}\kern-.6\wd0}}
\newcommand{\leqnomode}{\tagsleft@true}
\newcommand{\reqnomode}{\tagsleft@false}
\newenvironment{enumroman}{\begin{enumerate}

}{\end{enumerate}}
\title[Optimization for fractional $p$-Laplacian]{Optimization problems in rearrangement classes for fractional $p$-Laplacian equations}
\author[A.\ Iannizzotto, G.\ Porru]{Antonio Iannizzotto, Giovanni Porru}
\address[]{Dipartimento di Matematica e Informatica
\newline\indent
Universit\`a degli Studi di Cagliari
\newline\indent
Via Ospedale 72, 09124 Cagliari, Italy}
\email{antonio.iannizzotto@unica.it}
\email{gporru856@gmail.com}
\subjclass[2010]{35R11, 35P30.}
\keywords{Rearrangement class, Fractional $p$-Laplacian, Eigenvalues.}
\begin{document}

\begin{abstract}
We discuss two optimization problems related to the fractional $p$-Laplacian. First, we prove the existence of at least one minimizer for the principal eigenvalue of the fractional $p$-Laplacian with Dirichlet conditions, with a bounded weight function varying in a rearrangement class. Then, we investigate the maximization of the energy functional for general nonlinear equations driven by the same operator, as the reaction varies in a rearrangement class. In both cases, we provide a pointwise relation between the optimizing datum and the corresponding solution.
\end{abstract}

\maketitle

\begin{center}
Version of \today\
\end{center}

\section{Introduction}\label{sec1}

\noindent
The present paper deals with some optimization problems related to elliptic equations of nonlinear, nonlocal type, with data varying in rearrangement classes. For the reader's convenience, we recall here the basic definition, referring to Section \ref{sec2} for details. Given a bounded smooth domain $\Omega\subset\R^N$ and a non-negative function $g_0\in L^\infty(\Omega)$, we say that $g\in L^\infty(\Omega)$ lies in the rearrangement class of $g_0$, denoted $\g$, if for all $t\ge 0$
\[\big|\{g>t\}\big| = \big|\{g_0>t\}\big|,\]
where we denote $|\,\cdot\,|$ the $N$-dimensional Lebesgue measure of sets. We may define several functionals $\Phi:\g\to\R$ corresponding to variational problems, and study the optimization problems
\[\min_{g\in\g}\,\Phi(g), \quad \max_{g\in\g}\,\Phi(g).\]
We note that, since $\g$ is not a convex set, the problems above do not fall in the familiar case of convex optimization, whatever the nature of $\Phi$. The following is a classical example. For all $g\in\g$ consider the Dirichlet problem
\[\begin{cases}
-\Delta u = g(x) & \text{in $\Omega$} \\
u = 0 & \text{on $\partial\Omega$,}
\end{cases}\]
which, by classical results in the calculus of variations, admits a unique weak solution $u_g\in H^1_0(\Omega)$. So set
\[\Phi(g) = \int_\Omega gu_g\,dx.\]
The existence of a maximizer for $\Phi$, i.e., of a datum $\hat g\in\g$ s.t.\ for all $g\in\g$
\[\Phi(\hat g) \ge \Phi(g),\]
was proved in \cite{B,B1}, while the existence of a minimizer was investigated in \cite{BML}. One challenging feature of such problem is that, in general, the functional $\Phi$ turns out to be continuous (in a suitable sense) but the class $\g$ fails to be compact. Therefore, a possible strategy consists in optimizing $\Phi$ over the closure $\gc$ of $\g$ in the sequential weak* topology of $L^\infty(\Omega)$ (a much larger, and convex, set), and then proving that the maximizers and minimizers actually lie in $\g$ (which is far from being trivial).
\vskip2pt
\noindent
In addition, due to the nature of the rearrangement equivalence and some functional inequalities, the maximizer/minimizer $g$ may show some structural connection to the solution $u_g$ of the corresponding variational problem, e.g., $g=\eta\circ u_g$ for some monotone mapping $\eta$. This has interesting consequences, for instance let $g_0$ be the characteristic function of some subdomain $D_0\subset\Omega$, then the optimal $g$ is as well the characteristic function of some $D\subset\Omega$ s.t.\ $|D|=|D_0|$. Moreover, since $g=\eta\circ u_g$ and $u_g$ satisfies the Dirichlet condition, we deduce that any optimal domain $D$ has a positive distance from $\partial\Omega$.
\vskip2pt
\noindent
A similar approach applies to several variational problems and functionals. For instance, in \cite{CPS} the authors consider the following $p$-Laplacian equation with $p>1$, $q\in[0,p)$:
\[\begin{cases}
-\Delta_p u = g(x)u^{q-1} & \text{in $\Omega$} \\
u = 0 & \text{on $\partial\Omega$,}
\end{cases}\]
which admits a unique non-negative solution $u_g\in W^{1,p}_0(\Omega)$, and study the maximum and minimum over $\g$ of the functional
\[\Phi(g) = \int_\Omega gu_g^q\,dx.\]
In \cite{CEP}, the following weighted eigenvalue problem is considered:
\[\begin{cases}
-\Delta_p u = \lambda g(x)|u|^{p-2}u & \text{in $\Omega$} \\
u = 0 & \text{on $\partial\Omega$.}
\end{cases}\]
It is well known that the problem above admits a principal eigenvalue $\lambda(g)>0$ (see \cite{KLP}), and the authors prove that $\lambda(g)$ has a minimizer in $\g$.
\vskip2pt
\noindent
In recent years, several researchers have studied optimization problems related to elliptic equations of fractional order (see \cite{MBRS} for a general introduction to such problems and the related variational methods). In the linear framework, the model operator is the $s$-fractional Laplacian with $s\in(0,1)$, defined by
\[(-\Delta)^su(x) = C_{N,s}\lim_{\eps\to 0^+}\int_{B_\eps^c(x)}\frac{u(x)-u(y)}{|x-y|^{N+2s}}\,dy,\]
where $C_{N,s}>0$ is a normalization constant. In \cite{QHZ}, the following problem is examined:
\[\begin{cases}
(-\Delta)^s u+h(x,u) = g(x) & \text{in $\Omega$} \\
u = 0 & \text{in $\Omega^c$,}
\end{cases}\]
where $h(x,\cdot)$ is nondecreasing and grows sublinearly in the second variable. The solution $u_g\in H^s_0(\Omega)$ is unique, so the authors define the energy functional
\[\Phi(g) = \int_\Omega[gu-H(x,u)]\,dx-\frac{1}{2}\iint_{\R^N\times\R^N}\frac{|u(x)-u(y)|^2}{|x-y|^{N+2s}}\,dx\,dy,\]
and investigate the maximum of $\Phi$ over $\g$. Besides, in \cite{ACF}, the existence of a minimizer $g\in\g$ for the principal eigenvalue $\lambda(g)$ of the following problem was proved:
\[\begin{cases}
(-\Delta)^s u = \lambda g(x)u & \text{in $\Omega$} \\
u = 0 & \text{in $\Omega^c$.}
\end{cases}\]
Optimization of the principal eigenvalue of fractional operators has significant applications in biomathematics, see \cite{PV}. In all the aforementioned problems, optimization in $\g$ also yields representation formulas and qualitative properties (e.g., Steiner symmetry over convenient domains) of the optimal data.
\vskip2pt
\noindent
In the present paper, we focus on the following nonlinear, nonlocal operator:
\[\lk u(x) = \lim_{\eps\to 0^+}\int_{B_\eps^c(x)}|u(x)-u(y)|^{p-2}(u(x)-u(y))K(x,y)\,dy.\]
Here $N\ge 2$, $p>1$, $s\in(0,1)$, and $K:\R^N\times\R^N\to\R$ is a measurable kernel s.t.\ for a.e.\ $x,y\in\R^N$
\begin{itemize}[leftmargin=1cm]
\item[$(K_1)$] $K(x,y)=K(y,x)$;
\item[$(K_2)$] $C_1\le K(x,y)|x-y|^{N+ps}\le C_2$ ($0<C_1\le C_2$).
\end{itemize}
If $C_1=C_2=C_{N,p,s}>0$ (a normalization constant varying from one reference to the other), $\lk$ reduces to the $s$-fractional $p$-Laplacian
\[(-\Delta)_p^s u(x) = C_{N,p,s}\lim_{\eps\to 0^+}\int_{B_\eps^c(x)}\frac{|u(x)-u(y)|^{p-2}(u(x)-u(y))}{|x-y|^{N+ps}}\,dy,\]
which in turn coincides with the $s$-fractional Laplacian seen above for $p=2$. The nonlinear operator $\lk$ arises from problems in game theory (see \cite{BCF,C}). Besides, the special case $(-\Delta)_p^s$ can be seen as either an approximation of the classical $p$-Laplace operator for fixed $p$ and $s\to 1^-$ (see \cite{IN}), or an approximation of the fractional $\infty$-Laplacian for fixed $s$ and $p\to\infty$, with applications to the problem of H\"older continuous extensions of functions (see \cite{LL}). Equations driven by the fractional $p$-Laplacian are the subject of a vast literature, dealing with existence, qualitative properties, and regularity of the solutions (see for instance \cite{ILPS,IM,P}).
\vskip2pt
\noindent
Inspired by the cited references, we will examine two variational problems driven by $\lk$, set on a bounded domain $\Omega$ with $C^{1,1}$-smooth boundary, with a datum $g$ varying in a rearrangement class $\g$, and optimize the corresponding functionals. More precisely, we first deal with the nonlinear eigenvalue problem
\beq\label{epg}
\begin{cases}
\lk u = \lambda g(x)|u|^{p-2}u & \text{in $\Omega$} \\
u = 0 & \text{in $\Omega^c$,}
\end{cases}
\eeq
proving the existence and representation of a minimizer of the principal eigenvalue $\lambda(g)$ in $\g$. Then, we consider the general Dirichlet problem
\beq\label{dpg}
\begin{cases}
\lk u+h(x,u) = g(x) & \text{in $\Omega$} \\
u = 0 & \text{in $\Omega^c$,}
\end{cases}
\eeq
and find a maximizer of the associated energy functional. The dual problems (i.e., maximizing the principal eigenvalue and minimizing the energy functional) remain open for now.
\vskip2pt
\noindent
The structure of the paper is the following: in Section \ref{sec2} we recall some preliminaries on rearrangement classes and fractional order equations; in Section \ref{sec3} we deal with the eigenvalue problem \eqref{epg}; and in Section \ref{sec4} we deal with the general Dirichlet problem \eqref{dpg}.
\vskip4pt
\noindent
{\bf Notation.} For all $\Omega\subset\R^N$, we denote by $|\Omega|$ the $N$-dimensional Lebesgue measure of $\Omega$ and $\Omega^c=\R^N\setminus\Omega$. For all $x\in\R^N$, $r>0$ we denote by $B_r(x)$ the open ball centered at $x$ with radius $r$. When we say that $g\ge 0$ in $\Omega$, we mean $g(x)\ge 0$ for a.e.\ $x\in\Omega$, and similar expressions. Whenever $X$ is a function space on the domain $\Omega$, $X_+$ denotes the positive order cone of $X$. In any Banach space we denote by $\to$ strong (or norm) convergence, by $\rightharpoonup$ weak convergence, and by $\wc$ weak* convergence. For all $q\in[1,\infty]$, we denote by $\|\cdot\|_q$ the norm of $L^q(\Omega)$.
Finally, $C$ denotes several positive constants, varying from line to line.

\section{Preliminaries}\label{sec2}

\noindent
In this section we collect some necessary preliminary results on rearrangement classes and on fractional Sobolev spaces.

\subsection{Rearrangement classes}\label{ss21}

Let $\Omega\subset\R^N$ ($N\ge 2$) be a bounded domain, $g_0\in L^\infty(\Omega)$ be s.t.\ $0\le g_0\le M$ in $\Omega$ ($M>0$), and $g_0>0$ on some subset of $\Omega$ with positive measure. We say that a function $g\in L^\infty(\Omega)$ is a rearrangement of $g_0$, denoted $g\sim g_0$, if for all $t\ge 0$
\[\big|\{x\in\Omega:\,g(x)>t\}\big| = \big|\{x\in\Omega:\,g_0(x)>t\}\big|.\]
Also, we define the rearrangement class
\[\g = \big\{g\in L^\infty(\Omega):\,g\sim g_0\big\}.\]
Clearly, $0\le g\le M$ in $\Omega$ for all $g\in\g$. Recalling that $L^\infty(\Omega)$ is the topological dual of $L^1(\Omega)$, we can endow such space with the weak* topology, characterized by the following type of convergence:
\[g_n\wc g \ \Longleftrightarrow \ \lim_n\,\int_\Omega g_nh\,dx = \int_\Omega gh\,dx \ \text{for all $h\in L^1(\Omega)$.}\]
We denote by $\gc$ the closure of $\g$ in $L^\infty(\Omega)$ with respect to such topology. It is proved in \cite{B,B1} that $\gc$ is a sequentially weakly* compact, convex set, and that $0\le g\le M$ in $\Omega$ for all $g\in\gc$. Therefore, given a sequentially weakly* continuous functional $\Phi:\gc\to\R$, there exist $\check g,\hat g\in\gc$ s.t.\ for all $g\in\gc$
\[\Phi(\check g) \le \Phi(g) \le \Phi(\hat g).\]
In general, the extrema are not attained at points of $\g$. As usual, we say that $\Phi$ is G\^ateaux differentiable at $g\in\gc$, if there exists a linear functional $\Phi'(g)\in L^\infty(\Omega)^*$ s.t.\ for all $h\in\gc$
\[\lim_{\tau\to 0^+}\,\frac{\Phi(g+\tau(h-g))-\Phi(g)}{\tau} = \langle\Phi'(g),h-g\rangle.\]
We remark that $g\in\gc$ being a minimizer of $\Phi$ does not imply $\Phi'(g)=0$ in general. Nevertheless, if $\Phi$ is convex, then for all $h\in\gc$
\[\Phi(h) \ge \Phi(g)+\langle\Phi'(g),h-g\rangle,\]
with strict inequality if $\Phi$ is strictly convex and $h\neq g$ (see \cite{R} for an introduction to convex functionals and variational inequalities). Finally, let us recall a technical lemma on optimization of {\em linear} functionals over $\gc$, which also provides a representation formula:

\begin{lemma}\label{lin}
Let $h\in L^1(\Omega)$. Then,
\begin{enumroman}
\item\label{lin1} there exists $\hat g\in\g$ s.t.\ for all $g\in\gc$
\[\int_\Omega \hat g h\,dx \ge \int_\Omega gh\,dx;\]
\item\label{lin2} if $\hat g$ is unique, then there exists a nondecreasing map $\eta:\R\to\R$ s.t.\ $\hat g=\eta\circ h$ in $\Omega$.
\end{enumroman}
\end{lemma}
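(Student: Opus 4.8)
The plan is to prove the two parts by different mechanisms: part \ref{lin1} from weak* compactness together with Burton's description of the extreme points of $\gc$, and part \ref{lin2} from a rearrangement–swapping argument that exploits the uniqueness hypothesis.

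For \ref{lin1}, I would first note that the functional $\ell(g):=\int_\Omega gh\,dx$ is linear and, directly from the definition of weak* convergence, sequentially weak*-continuous on $L^\infty(\Omega)$. Since $L^1(\Omega)$ is separable and $\gc$ is bounded, the weak* topology is metrizable on $\gc$, so $\gc$ is weak*-compact and convex and $\ell$ attains its maximum $m$ there. Set $M:=\{g\in\gc:\ell(g)=m\}$, which is nonempty, convex and weak*-closed, hence compact. Because $\ell$ is affine, $M$ is a \emph{face} of $\gc$: if $g=\tfrac12(g_1+g_2)$ with $g_1,g_2\in\gc$ and $\ell(g)=m$, then necessarily $\ell(g_1)=\ell(g_2)=m$, i.e.\ $g_1,g_2\in M$. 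By the Krein--Milman theorem $M$ has an extreme point $\hat g$; being extreme in the face $M$, it is also extreme in $\gc$. Since it is known from \cite{B,B1} that the extreme points of $\gc$ are exactly the elements of $\g$, we conclude $\hat g\in\g$, and by construction $\int_\Omega\hat g h\,dx=m\ge\int_\Omega gh\,dx$ for all $g\in\gc$.

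For \ref{lin2}, let $\hat g\in\g$ be the unique maximizer provided by \ref{lin1}. I would first show that $\hat g$ respects the ordering of $h$, namely $h(x)<h(y)\Rightarrow\hat g(x)\le\hat g(y)$ for a.e.\ $x,y\in\Omega$. If this failed, a routine argument separating by rational thresholds would produce reals $a<b$, $\beta<\alpha$ and sets $E\subseteq\{h\le a\}$, $E'\subseteq\{h\ge b\}$ of equal positive measure with $\hat g\ge\alpha$ a.e.\ on $E$ and $\hat g\le\beta$ a.e.\ on $E'$. Fixing a measure-preserving bijection $\sigma:E\to E'$ and letting $\tilde g$ agree with $\hat g$ on $\Omega\setminus(E\cup E')$, with $\hat g\circ\sigma$ on $E$ and with $\hat g\circ\sigma^{-1}$ on $E'$, one has $\tilde g\sim\hat g\sim g_0$, so $\tilde g\in\g$, while a change of variables gives
\[\int_\Omega\tilde g h\,dx-\int_\Omega\hat g h\,dx=\int_E\big(\hat g-\hat g\circ\sigma\big)\big(h\circ\sigma-h\big)\,dx\ge(\alpha-\beta)(b-a)\,|E|>0,\]
contradicting the maximality of $\hat g$. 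Next, for every value $t_0$ with $|\{h=t_0\}|>0$, rearranging $\hat g$ by a measure-preserving self-bijection of $\{h=t_0\}$ leaves $\int_\Omega\hat g h\,dx$ unchanged (as $h\equiv t_0$ there) and stays in $\g$, so uniqueness forces $\hat g$ to be a.e.\ constant on $\{h=t_0\}$; since there are at most countably many such $t_0$, off a null set $\hat g$ is constant on each level set of $h$. Combining the two facts, $\hat g$ is, up to a null set, a nondecreasing function of $h$: taking $\eta(t)$ to be the essential supremum of $\hat g$ over $\{h\le t\}$ (a nondecreasing, hence Borel, function) one checks $\hat g=\eta\circ h$ a.e.

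The hardest points will be exactly the two nontrivial inputs above. In \ref{lin1}, the substance is the passage from a maximizer in the large convex set $\gc$ to one in $\g$: this genuinely rests on Burton's structure theorem identifying $\g$ with the extreme points of $\gc$, together with the observation that the set of maximizers of a \emph{linear} functional is a face. In \ref{lin2}, the delicate issue is the behaviour of $\hat g$ on level sets of $h$ of positive measure, where the swapping argument gives no control; it is precisely here that the uniqueness hypothesis is indispensable, forcing $\hat g$ to be constant there and hence a bona fide function of $h$. The remaining steps — the change-of-variables identity, the extraction of the rational thresholds, and the monotonicity and measurability of $\eta$ — are routine.
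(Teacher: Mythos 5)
Your proof is correct, but it follows a genuinely different route from the paper, which disposes of both parts by citation: part \ref{lin1} by invoking \cite[Theorems 1, 4]{B} to get a maximizer $\hat g\in\g$ over $\g$ and then extending the inequality to all of $\gc$ by taking a sequence $g_n\wc g$ with $g_n\in\g$ and passing to the limit; part \ref{lin2} by quoting \cite[Theorem 5]{B} verbatim. For \ref{lin1} you instead maximize over the compact convex set $\gc$ first and descend to $\g$ via Krein--Milman applied to the face of maximizers, together with Burton's identification of $\g$ as the set of extreme points of $\gc$; this is essentially how Burton proves his theorem, but it leans on a deeper structural fact (the extreme-point characterization) than the paper needs, whereas the paper's density-plus-limit argument only requires existence of a maximizer over $\g$. (Minor wording point: metrizability of the weak* topology on $\gc$ does not by itself give compactness --- that comes from Banach--Alaoglu plus weak*-closedness of $\gc$; metrizability is only needed to reconcile the sequential closure in the definition of $\gc$ with the topological one.) For \ref{lin2} your swapping argument is a genuine, self-contained replacement for the citation: the change-of-variables identity, the reduction to rational thresholds, and the treatment of positive-measure level sets via uniqueness are all sound, and the level-set step is indeed exactly where uniqueness is indispensable. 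What your approach buys is transparency --- the reader sees \emph{why} the maximizer is a monotone function of $h$ --- at the cost of length and of re-deriving facts the paper is content to import.
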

\begin{proof}
By \cite[Theorems 1, 4]{B} there exists $\hat g\in\g$ which maximizes the linear functional
\[g\mapsto\int_\Omega gh\,dx\]
over $\g$. Given $g\in\gc\setminus\g$, we can find a sequence $(g_n)$ in $\g$ s.t.\ $g_n\wc g$. For all $n\in\N$ we have
\[\int_\Omega\hat gh\,dx \ge \int_\Omega g_nh\,dx,\]
so passing to the limit we get
\[\int_\Omega\hat gh\,dx \ge \int_\Omega gh\,dx,\]
thus proving \ref{lin1}. From \cite[Theorem 5]{B} we have \ref{lin2}.
\end{proof}

\subsection{Fractional Sobolev spaces}\label{ss22}

We recall some basic notions about the variational formulations of problems \eqref{epg}, \eqref{dpg}. For $p>1$, $s\in(0,1)$, all open $\Omega\subseteq\R^N$, and all measurable $u:\Omega\to\R$ we define the Gagliardo seminorm
\[[u]_{s,p,\Omega} = \Big[\iint_{\Omega\times\Omega}\frac{|u(x)-u(y)|^p}{|x-y|^{N+ps}}\,dx\,dy\Big]^\frac{1}{p}.\]
The corresponding fractional Sobolev space is defined by
\[W^{s,p}(\Omega) = \big\{u\in L^p(\Omega):\,[u]_{s,p,\Omega}<\infty\big\}.\]
If $\Omega$ is bounded and with a $C^{1,1}$-smooth boundary, we incorporate the Dirichlet conditions by defining the space
\[\w = \big\{u\in W^{s,p}(\R^N):\,u=0 \ \text{in $\Omega^c$}\big\},\]
endowed with the norm $\|u\|_{\w}=[u]_{s,p,\R^N}$. This is a uniformly convex, separable Banach space with dual $W^{-s,p'}(\Omega)$, s.t.\ $C^\infty_c(\Omega)$ is a dense subset of $\w$, and the embedding $\w\hookrightarrow L^q(\Omega)$ is continuous for all $q\in[1,p^*_s]$ and compact for all $q\in[1,p^*_s)$, where
\[p^*_s = \begin{cases}
\displaystyle\frac{Np}{N-ps} & \text{if $ps<N$} \\
\infty & \text{if $ps\ge N$.}
\end{cases}\]
For a detailed account on fractional Sobolev spaces, we refer the reader to \cite{DNPV,L}. Now let $K:\R^n\times\R^N\to\R$ be a measurable kernel satisfying $(K_1)$, $(K_2)$. We introduce an equivalent norm on $\w$ by setting
\[[u]_K = \Big[\iint_{\R^N\times\R^N}|u(x)-u(y)|^pK(x,y)\,dx\,dy\Big]^\frac{1}{p}.\]
We can now rephrase more carefully the definitions given in Section \ref{sec1}, by defining the operator $\lk:\w\to W^{-s,p'}(\Omega)$ as the gradient of the $C^1$-functional
\[u \mapsto \frac{[u]_K^p}{p}.\]
Equivalently, for all $u,\varphi\in\w$ we set
\[\langle\lk u,\varphi\rangle = \iint_{\R^N\times\R^N}|u(x)-u(y)|^{p-2}(u(x)-u(y))(\varphi(x)-\varphi(y))K(x,y)\,dx\,dy.\]
Both problems that we are going to study belong to the following class of nonlinear, nonlocal Dirichlet problems:
\beq\label{dir}
\begin{cases}
\lk u = f(x,u) & \text{in $\Omega$} \\
u = 0 & \text{in $\Omega^c$,}
\end{cases}
\eeq
where $f:\Omega\times\R\to\R$ is a Carath\'eodory mapping subject to the following subcritical growth conditions: there exist $C>0$, $r\in(1,p^*_s)$ s.t.\ for a.e.\ $x\in\Omega$ and all $t\in\R$
\beq\label{gc}
|f(x,t)| \le C(1+|t|^{r-1}).
\eeq
We say that $u\in\w$ is a weak solution of \eqref{dir}, if for all $\varphi\in\w$
\[\langle\lk u,\varphi\rangle = \int_\Omega f(x,u)\varphi\,dx.\]
There is a wide literature on problem \eqref{dir}, especially for the model case $\lk=(-\Delta)_p^s$, see for instance \cite{DPQ,ILPS,IM1,IMP,P}. We will only need to recall the following properties, which can be proved adapting \cite[Proposition 2.3]{IM1} and \cite[Theorem 1.5]{DPQ}, respectively:

\begin{lemma}\label{apb}
Let $f$ satisfy \eqref{gc}, $u\in\w$ be a weak solution of \eqref{dir}. Then, $u\in L^\infty(\Omega)$.
\end{lemma}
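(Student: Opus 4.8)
The plan is to run a De Giorgi–Moser iteration on the level sets of $u$, exploiting the subcritical growth \eqref{gc} and the coercivity of $[\,\cdot\,]_K$ guaranteed by $(K_2)$. First I would fix a weak solution $u\in\w$; since the argument for $\pm u$ is symmetric it suffices to bound $u^+=\max\{u,0\}$. For $k\ge 0$ set $w_k=(u-k)^+$, which belongs to $\w$, and test the weak formulation with $\varphi=w_k$. The key algebraic inequality is the standard one for the fractional $p$-Laplacian-type form: for all $a,b\in\R$ and $k\ge0$,
\[
|a-b|^{p-2}(a-b)\big((a-k)^+-(b-k)^+\big)\ \ge\ \big|(a-k)^+-(b-k)^+\big|^p,
\]
so that, using $(K_2)$ to compare $K$ with $|x-y|^{-N-ps}$ from below,
\[
C_1\,[w_k]_{s,p,\R^N}^p\ \le\ \langle\lk u,w_k\rangle\ =\ \int_\Omega f(x,u)\,w_k\,dx.
\]
On the right-hand side I would estimate $|f(x,u)|\le C(1+|u|^{r-1})$ and, on the set $A_k=\{u>k\}$, write $|u|^{r-1}\le (w_k+k)^{r-1}$, so that the integral is controlled by $C\int_{A_k}(1+k^{r-1}+w_k^{r-1})w_k\,dx$; combined with the fractional Sobolev–Poincaré inequality $\|w_k\|_{p^*_s}^p\le C_*[w_k]_{s,p,\R^N}^p$ this yields the basic recursive estimate
\[
\|w_k\|_{p^*_s}^p\ \le\ C\int_{A_k}\big(1+k^{r-1}\big)w_k\,dx+C\int_{A_k}w_k^{r}\,dx.
\]

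Next I would convert this into a decay estimate for the quantities $\phi(k):=\int_{A_k} (something)$ or, in the cleaner Stampacchia formulation, for $\psi(k):=\|w_k\|_{p^*_s}$ together with $a_k:=|A_k|$. Using Hölder's inequality with the exponent $p^*_s$ on the right-hand side — here is where subcriticality $r<p^*_s$ is essential, since it leaves a positive power of $a_k$ to spare — and the elementary comparison $a_h\,(h-k)\le\int_{A_k} w_k\le \|w_k\|_{p^*_s} a_k^{1-1/p^*_s}$ valid for $h>k$, I would derive an inequality of Stampacchia type
\[
a_h\ \le\ \frac{C\,(1+h)^{\beta}}{(h-k)^{\gamma}}\,a_k^{1+\delta}\qquad\text{for all }h>k\ge k_0,
\]
with $\delta>0$ coming precisely from the gap $p^*_s-r>0$. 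The classical lemma of Stampacchia (e.g.\ \cite[Lemma B.1]{KS}-type results, or an elementary fast-geometric-convergence argument) then gives $a_{k}=0$ for $k$ beyond some finite level $k_\infty$, i.e.\ $u^+\le k_\infty$ a.e.; applying the same reasoning to $-u$ completes the proof that $u\in L^\infty(\Omega)$.

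The routine parts — the algebraic inequality, the Sobolev embedding, Hölder bookkeeping — are standard; the step that needs the most care is organizing the iteration so that the nonlinear term $w_k^r$ does not destroy the supercritical-exponent gain. Concretely, the main obstacle is handling the "bad" term $\int_{A_k}w_k^r\,dx$: one must interpolate $\|w_k\|_r$ between $\|w_k\|_{p^*_s}$ and a power of $a_k$, absorb the $\|w_k\|_{p^*_s}$ factor into the left-hand side for $k$ large (which is legitimate because $a_k\to0$, making the coefficient small), and only then obtain the clean recursive inequality above. Once that absorption is justified — it is here that boundedness of $\Omega$ and $g$, hence the $L^\infty$ a priori control $0\le g\le M$ in the applications, can be invoked to simplify the growth term — the rest is the mechanical Stampacchia iteration. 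Since the statement says the proof may be obtained "adapting \cite[Proposition 2.3]{IM1}", I would in practice cite that reference for the details and present only the skeleton above.
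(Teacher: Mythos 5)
The paper gives no proof of this lemma beyond the pointer to \cite[Proposition 2.3]{IM1}, and your De Giorgi--Stampacchia level-set iteration is exactly the standard argument behind such references: the algebraic inequality for the truncations $w_k=(u-k)^+$, the use of $(K_2)$ and the Sobolev embedding, and the absorption of the term $\int_{A_k}w_k^r\,dx$ via the a priori bound $\|w_k\|_{p^*_s}\le\|u^+\|_{p^*_s}$ together with $|A_k|\to0$ are all correct and yield a Stampacchia inequality with exponent $(p^*_s-1)/(p-1)>1$, so the iteration closes. The only points worth adding are the trivial case $ps\ge N$ (where one either uses the embedding into H\"older spaces for $ps>N$ or replaces $p^*_s$ by a large finite exponent for $ps=N$), and that your aside about $0\le g\le M$ is irrelevant here, since the lemma concerns a general $f$ satisfying \eqref{gc}.
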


\begin{lemma}\label{smp}
Let $f$ satisfy \eqref{gc}, and $\delta>0$, $c\in C(\overline\Omega)_+$ be s.t.\ for a.e.\ $x\in\Omega$ and all $t\in[0,\delta]$
\[f(x,t) \ge -c(x)t^{p-1}.\]
Also, let $u\in\w_+$ be a weak solution of \eqref{gc}. Then, either $u=0$, or $u>0$ in $\Omega$.
\end{lemma}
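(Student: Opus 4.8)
The plan is to reduce the assertion to a strong minimum principle for non-negative weak supersolutions of an equation of the form $\lk u+\bar c\,|u|^{p-2}u=0$ with $\bar c>0$ constant, which is essentially \cite[Theorem 1.5]{DPQ}: that statement is given there for $(-\Delta)_p^s$, but its proof carries over to $\lk$ thanks to $(K_1)$, $(K_2)$. To begin with, by Lemma \ref{apb} we have $u\in L^\infty(\Omega)$; set $m=\|u\|_\infty$, and recall that $u=0$ in $\Omega^c$, so $0\le u\le m$ in $\R^N$. If $m=0$ then $u=0$ and we are done, so from now on we assume $m>0$, i.e.\ $u\not\equiv 0$.

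Next I would verify that $u$ is a weak supersolution in $\Omega$ of
\[\lk u+\bar c\,|u|^{p-2}u = 0, \qquad \text{where} \quad \bar c = \|c\|_\infty+C(1+m^{r-1})\delta^{1-p} > 0,\]
with $C,r$ as in \eqref{gc}. Indeed, for a.e.\ $x\in\Omega$ with $u(x)\le\delta$ the hypothesis on $f$ gives $f(x,u(x))\ge -c(x)u(x)^{p-1}\ge -\bar c\,u(x)^{p-1}$, while for a.e.\ $x$ with $u(x)>\delta$, so that $u(x)^{p-1}>\delta^{p-1}$ since $p>1$, the growth condition \eqref{gc} gives
\[f(x,u(x)) \ge -C(1+m^{r-1}) \ge -\big[\|c\|_\infty+C(1+m^{r-1})\delta^{1-p}\big]\delta^{p-1} \ge -\bar c\,u(x)^{p-1}.\]
Hence $f(x,u(x))+\bar c\,u(x)^{p-1}\ge 0$ for a.e.\ $x\in\Omega$, and since $u\in L^\infty(\Omega)$ all the integrals below are finite, so testing the weak formulation of \eqref{dir} against an arbitrary $\varphi\in\w_+$ yields
\[\langle\lk u,\varphi\rangle+\int_\Omega\bar c\,u^{p-1}\varphi\,dx = \int_\Omega\big(f(x,u)+\bar c\,u^{p-1}\big)\varphi\,dx \ge 0,\]
which is precisely the claimed supersolution property.

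Finally, since $u\ge 0$ in $\R^N$, $u\not\equiv 0$, and $u$ is a weak supersolution of $\lk u+\bar c\,|u|^{p-2}u=0$ in $\Omega$ with $\bar c>0$ constant, the strong minimum principle (the $\lk$-version of \cite[Theorem 1.5]{DPQ}) gives $u>0$ in $\Omega$, as desired.

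I expect the only genuine difficulty to be the adaptation of this strong minimum principle to the general kernel $K$. Its proof rests on a Hopf-type boundary lemma for $\lk$ (built from an explicit barrier together with a weak Harnack inequality for $\lk$-supersolutions) followed by a connectedness argument on the set $\{u>0\}$; all of these ingredients survive the passage from $(-\Delta)_p^s$ to $\lk$ because $(K_2)$ makes $[\,\cdot\,]_K$ comparable from above and below with the Gagliardo seminorm, so the energy estimates and the comparison functions of the model case go through with only the constants changing. Carrying this out carefully---rather than the elementary truncation above---is where the real work lies; alternatively, one could derive the minimum principle for $\lk$ directly from a weak Harnack inequality for supersolutions, which is available under $(K_1)$, $(K_2)$ via De Giorgi--Nash--Moser iteration.
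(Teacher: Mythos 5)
Your proposal is correct and follows the same route the paper intends: the paper gives no proof of this lemma beyond the remark that it "can be proved adapting \cite[Theorem 1.5]{DPQ}", and your truncation argument (splitting at $u(x)\le\delta$ versus $u(x)>\delta$ and absorbing both regimes into a single constant $\bar c$ so that $u$ becomes a non-negative weak supersolution of $\lk u+\bar c\,|u|^{p-2}u=0$) is exactly the standard reduction that makes that citation applicable. Like the paper, you defer the real content --- transporting the Del Pezzo--Quaas strong minimum principle from $(-\Delta)_p^s$ to the general kernel $K$ under $(K_1)$, $(K_2)$ --- to the observation that the barrier and Harnack estimates survive, which is a fair match to the level of detail the authors themselves provide.
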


\noindent
We conclude with a remark on regularity of the weak solutions. In the model case of the fractional $p$-Laplacian, under hypothesis \eqref{gc}, using Lemma \ref{apb} above and \cite[Theorems 1.1, 2.7]{IM}, it can be seen that whenever $u\in\w$ solves \eqref{dir}, we have $u\in C^s(\R^N)$ and there exist $\alpha\in(0,s)$ depending only on the data of the problem, s.t.\ the function
\[\frac{u}{{\rm dist}(\cdot,\Omega^c)^s}\]
admits a $\alpha$-H\"older continuous extension to $\overline\Omega$. The same result is not known for the general operator $\lk$, except the linear case $p=2$ with a special anisotropic kernel, see \cite{RO}.

\section{Minimization of the principal eigenvalue}\label{sec3}

\noindent
Given a bounded domain $\Omega\subset\R^N$ with $C^{1,1}$-smooth boundary, $p>1$, $s\in(0,1)$, and $K:\R^N\times\R^N\to\R$ satisfying $(K_1)$, $(K_2)$, $g_0\in L^\infty(\Omega)_+$ as in Subsection \ref{ss21}, for any $g\in\g$, $\lambda\in\R$ we consider the nonlinear eigenvalue problem \eqref{epg}. As in Subsection \ref{ss22}, we say that $u\in\w$ is a (weak) solution of \eqref{epg} if for all $\varphi\in\w$
\[\langle\lk u,\varphi\rangle = \lambda\int_\Omega g|u|^{p-2}u\varphi\,dx.\]
We say that $\lambda\in\R$ is an eigenvalue if \eqref{epg} admits a solution $u\neq 0$, which is then a $\lambda$-eigenfunction. Though a full description of the eigenvalues of \eqref{epg} is missing, from \cite{FP,I,LL} we know that for all $g\in L^\infty(\Omega)_+$ there exists a {\em principal eigenvalue} $\lambda(g)>0$, namely the smallest positive eigenvalue, which admits the following variational characterization:
\beq\label{peg}
\lambda(g) = \inf_{u\neq 0}\,\frac{[u]_K^p}{\int_\Omega g|u|^p\,dx}.
\eeq
In addition, from \cite{FP} we know that $\lambda(g)$ is an isolated eigenvalue, simple, with constant sign eigenfunctions, while for any eigenvalue $\lambda>\lambda(g)$ the associated $\lambda$-eigenfunctions change sign in $\Omega$. So, recalling Lemma \ref{smp}, there exists a unique normalized positive $\lambda(g)$-eigenfunction $u_g\in\w$ s.t.\
\[\int_\Omega gu_g^p\,dx = 1, \ [u_g]_K^p = \lambda(g).\]
In particular $g\mapsto\lambda(g)$ defines a real-valued functional defined in the rearrangement class of weights $\g$ (or in $\gc$). The problem we consider in this section is minimization of the principal eigenvalue over $\g$, i.e.,
\[\min_{g\in\g}\,\lambda(g).\]
We prove that such problem admits at least one solution, that any solution actually minimizes $\lambda(g)$ over the larger set $\gc$, while all minimizers over $\gc$ lie in $\g$, and finally that any minimal weight can be represented as a nondecreasing function of the corresponding eigenfunction:

\begin{theorem}\label{min}
Let $\Omega\subset\R^N$ be a bounded domain with $C^{1,1}$-boundary, $p>1$, $s\in(0,1)$, $K:\R^N\times\R^N\to\R$ be measurable satisfying $(K_1)$, $(K_2)$, $g_0\in L^\infty(\Omega)_+\setminus\{0\}$, $\g$ be the rearrangement class of $g_0$, and $\lambda(g)$ be defined by \eqref{peg} for all $g\in\gc$. Then:
\begin{enumroman}
\item\label{min1} there exists $\hat g\in\g$ s.t.\ $\lambda(\hat g)\le\lambda(g)$ for all $g\in\g$;
\item\label{min2} for all $\hat g$ as in \ref{min1} and $g\in\gc\setminus\g$, $\lambda(\hat g)<\lambda(g)$;
\item\label{min3} for all $\hat g$ as in \ref{min1} there exists a nondecreasing map $\eta:\R\to\R$ s.t.\ $\hat g=\eta\circ u_{\hat g}$ in $\Omega$.
\end{enumroman}
\end{theorem}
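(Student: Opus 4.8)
The plan is to mimic the classical rearrangement-optimization scheme adapted to the nonlinear, nonlocal setting, exploiting the variational characterization \eqref{peg}. First I would establish that $\lambda:\gc\to\R$ is sequentially weakly* continuous. Since for fixed $u\neq 0$ the map $g\mapsto\int_\Omega g|u|^p\,dx$ is sequentially weakly* continuous (because $|u|^p\in L^1(\Omega)$, as $u\in\w\hookrightarrow L^p(\Omega)$), the functional $\lambda(\cdot)$ is, by \eqref{peg}, an infimum of such continuous functionals composed with $x\mapsto [u]_K^p/x$; this gives upper semicontinuity immediately. For lower semicontinuity one uses that along a weakly* convergent sequence $g_n\wc g$ the normalized eigenfunctions $u_{g_n}$ are bounded in $\w$ (since $[u_{g_n}]_K^p=\lambda(g_n)$ stays bounded, the bound on $\lambda(g_n)$ following from testing \eqref{peg} with a fixed function together with the uniform bound $0\le g_n\le M$ and a lower bound on $\int_\Omega g_n|u|^p$ that one gets from $g_0\not\equiv 0$ via the rearrangement-invariant quantity $\int_\Omega g_n\,dx=\int_\Omega g_0\,dx$), hence up to a subsequence $u_{g_n}\rightharpoonup u$ in $\w$ and $u_{g_n}\to u$ strongly in $L^p(\Omega)$ by compact embedding; then $\int_\Omega g_n u_{g_n}^p\,dx\to\int_\Omega g u^p\,dx=1$ by splitting $\int g_n u_{g_n}^p - \int g_n u^p + \int g_n u^p$ and using weak* convergence on the last term plus $L^p$ strong convergence on the difference, and by weak lower semicontinuity of $[\cdot]_K^p$ we get $[u]_K^p\le\liminf[u_{g_n}]_K^p=\liminf\lambda(g_n)$, so $\lambda(g)\le[u]_K^p/\int_\Omega g u^p\,dx\le\liminf\lambda(g_n)$. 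Continuity of $\lambda$ on the sequentially weakly* compact $\gc$ then yields a minimizer $\check g\in\gc$, and combined with the fact that $\g$ is weakly* dense in $\gc$ this gives part \ref{min1} once we show the minimizer actually lies in $\g$; but logically it is cleaner to first obtain a minimizer over $\gc$, then prove parts \ref{min2}–\ref{min3} which force it into $\g$, and finally deduce \ref{min1}.

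The heart of the argument is the following observation for a minimizer $\hat g$ of $\lambda$ over $\gc$ with associated normalized positive eigenfunction $u=u_{\hat g}$. For any $g\in\gc$, using \eqref{peg} with the test function $u$ we get
\[
\lambda(g)\le\frac{[u]_K^p}{\int_\Omega g u^p\,dx}=\frac{\lambda(\hat g)}{\int_\Omega g u^p\,dx}.
\]
On the other hand $\int_\Omega\hat g u^p\,dx=1$. Now apply Lemma \ref{lin} with $h=u^p\in L^1(\Omega)$: there is $\tilde g\in\g$ maximizing $g\mapsto\int_\Omega g u^p\,dx$ over $\gc$, so $\int_\Omega\tilde g u^p\,dx\ge\int_\Omega\hat g u^p\,dx=1$, and therefore $\lambda(\tilde g)\le\lambda(\hat g)$. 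Since $\hat g$ was a minimizer over $\gc\supseteq\g\ni\tilde g$, equality $\lambda(\tilde g)=\lambda(\hat g)$ holds, which forces $\int_\Omega\tilde g u^p\,dx=1=\int_\Omega\hat g u^p\,dx$; thus $\hat g$ itself maximizes the linear functional $g\mapsto\int_\Omega g u^p\,dx$ over $\gc$. This simultaneously shows that any minimizer over $\gc$ can be matched by a minimizer in $\g$ (giving \ref{min1}), and it sets up the representation: if the maximizer of $g\mapsto\int_\Omega g u^p\,dx$ over $\gc$ is \emph{unique}, Lemma \ref{lin}\ref{lin2} gives a nondecreasing $\eta$ with $\hat g=\eta\circ(u^p)$; composing $\eta$ with the (strictly increasing on the relevant range, by $u\ge 0$) map $t\mapsto t^p$ and relabelling yields a nondecreasing $\eta$ with $\hat g=\eta\circ u_{\hat g}$, which is \ref{min3}.

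For \ref{min2} — that no minimizer can lie in $\gc\setminus\g$ — I would argue by contradiction combined with the uniqueness of the linear maximizer. Suppose $\hat g\in\gc\setminus\g$ is a minimizer. The argument above shows $\hat g$ maximizes $g\mapsto\int_\Omega g u^p\,dx$ over $\gc$ with $u=u_{\hat g}$. The key point is that $u>0$ in $\Omega$ by Lemma \ref{smp} (applied with $f(x,t)=\lambda(\hat g)\hat g(x)|t|^{p-2}t$, which satisfies the one-sided bound with $c\equiv 0$), and that by the nonlocal strong maximum principle / Hopf-type behaviour $u$ cannot be constant on any set of positive measure — indeed, $u^p$ is strictly monotone along the level sets needed to rule out level sets of positive measure. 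More precisely, one shows $|\{u^p=t\}|=0$ for every $t>0$: if some level set $\{u=c\}$ had positive measure, then on that set $\lk u$ would have a pointwise representation contradicting the equation $\lk u=\lambda(\hat g)\hat g\,u^{p-1}$ with $\hat g\le M$ and $u$ bounded — this is the standard fact that positive solutions of such equations have no flat level sets, and can be invoked from the regularity/strong-maximum-principle circle of results underlying Lemma \ref{smp}. Once $u^p$ has no level sets of positive measure, the maximizer of the linear functional $g\mapsto\int_\Omega g\,u^p\,dx$ over $\gc$ is unique (this is part of the Burton–McLeod theory, \cite{B,B1}: when the weight $h$ has no flat parts, the "stacking" rearrangement is the unique maximizer and it lies in $\g$), so $\hat g\in\g$, a contradiction; and the same uniqueness feeds into \ref{min3}. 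The main obstacle I anticipate is exactly this no-flat-level-sets property for the operator $\lk$ under only $(K_1)$–$(K_2)$: the clean $C^s$ / weighted-Hölder regularity quoted after Lemma \ref{smp} is available for $(-\Delta)_p^s$ but not for general $K$, so I would need either to extract enough from Lemma \ref{smp}'s proof technique (a nonlocal logarithmic/Harnack estimate ruling out flat parts) or to restrict the representation statement accordingly; establishing that $|\{u_{\hat g}=t\}|=0$ for all $t>0$ in the general-kernel case is where the real work lies, everything else being a fairly routine assembly of \eqref{peg}, Lemma \ref{lin}, and weak* compactness.
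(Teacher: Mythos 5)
Your continuity argument and your derivation of part \ref{min1} are sound: the observation that a minimizer $\hat g$ of $\lambda$ over $\gc$ forces $\int_\Omega \tilde g\, u_{\hat g}^p\,dx = 1$ for the Burton maximizer $\tilde g\in\g$ of the linear functional, hence $\lambda(\tilde g)=\lambda(\hat g)$, is a clean route to \ref{min1} that bypasses convexity entirely. The genuine gap is in \ref{min2} and \ref{min3}, where you need the maximizer of $g\mapsto\int_\Omega g\,u_{\hat g}^p\,dx$ over $\gc$ to be \emph{unique} in order to invoke Lemma \ref{lin} \ref{lin2} and to exclude minimizers in $\gc\setminus\g$. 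You propose to obtain uniqueness from the property $|\{u_{\hat g}=t\}|=0$ for all $t>0$, and you correctly flag this as the hard point --- but it is not merely hard, it is the wrong tool here: no such level-set or flatness result is established (or available) for $\lk$ under only $(K_1)$--$(K_2)$, where even the pointwise evaluation of $\lk u$ on a flat level set is problematic, and the paper explicitly notes that fine regularity is unknown outside the model case $(-\Delta)_p^s$. As written, your argument for \ref{min2}--\ref{min3} does not close.

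The paper obtains the required uniqueness by a purely variational device that your outline is missing: it sets $\Phi(g)=1/\lambda(g)^2$ and observes that
\[
\Phi(g)=\sup_{u\in\w_+\setminus\{0\}}\Big[2\int_\Omega g u^p\,dx-[u]_K^{2p}\Big],
\]
a supremum of functionals that are \emph{affine} in $g$ (the exponent $2$ is exactly what makes the scaling-optimized value equal $\big[\int_\Omega gu^p\,dx\big]^2/[u]_K^{2p}$). Hence $\Phi$ is convex, strictly convex because the maximizing $u$ is unique, and G\^ateaux differentiable with $\Phi'(g)=2\tilde u_g^p$. Strict convexity plus the gradient inequality then give, for any maximizer $\hat g$ of $\Phi$ and any $g\in\gc\setminus\{\hat g\}$, $\Phi(\hat g)\ge\Phi(g)>\Phi(\hat g)+2\int_\Omega(g-\hat g)\,\tilde u_{\hat g}^p\,dx$, i.e.\ $\hat g$ is the \emph{unique} maximizer of the linear functional --- exactly the uniqueness you need for Lemma \ref{lin} \ref{lin2} and for ruling out optimizers in $\gc\setminus\g$, with no regularity theory or level-set analysis at all. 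Replacing your level-set argument by this convexity argument turns your outline into a correct proof.
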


\noindent
The proof of Theorem \ref{min} requires several steps. First, we aim at minimizing $\lambda(g)$ over $\gc$. This is possible due to the following lemma:

\begin{lemma}\label{esc}
The functional $\lambda(g)$ is sequentially weakly* continuous in $\gc$.
\end{lemma}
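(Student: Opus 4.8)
The plan is to establish sequential weak* continuity of $g\mapsto\lambda(g)$ on $\gc$ by combining upper and lower semicontinuity, exploiting the variational characterization \eqref{peg} together with the compact embedding $\w\hookrightarrow L^p(\Omega)$. Fix a sequence $(g_n)$ in $\gc$ with $g_n\wc g$ in $L^\infty(\Omega)$; recall $0\le g_n,g\le M$ throughout. For each $n$ let $u_n=u_{g_n}\in\w$ be the normalized positive principal eigenfunction, so $[u_n]_K^p=\lambda(g_n)$ and $\int_\Omega g_n u_n^p\,dx=1$.

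For the \emph{upper} bound, I would fix any test function $v\neq 0$ in $\w$ (in fact $v\in C^\infty_c(\Omega)$ suffices by density, so that $v^p\in L^1(\Omega)$ and $\int_\Omega g_n v^p\,dx\to\int_\Omega g v^p\,dx$ by weak* convergence). Then $\lambda(g_n)\le [v]_K^p/\int_\Omega g_n v^p\,dx$, and passing to the limit gives $\limsup_n\lambda(g_n)\le [v]_K^p/\int_\Omega g v^p\,dx$; taking the infimum over such $v$ yields $\limsup_n\lambda(g_n)\le\lambda(g)$. One minor point to check here is that $\int_\Omega g v^p\,dx>0$, which holds because $g\ge 0$ is not identically zero (its distribution function coincides with that of $g_0$) and $v$ can be chosen with support in a region where $g$ is positive — or more cleanly, one first notes $\lambda(g)<\infty$ since $\int_\Omega g u_g^p\,dx=1$ is attainable.

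For the \emph{lower} bound, I would use the sequence $(u_n)$ itself. From $[u_n]_K^p=\lambda(g_n)$ and the upper bound just obtained, $(\lambda(g_n))$ is bounded, hence $(u_n)$ is bounded in $\w$. Passing to a subsequence, $u_n\rightharpoonup u$ in $\w$ and $u_n\to u$ in $L^p(\Omega)$ by compact embedding, with $u\ge 0$. Weak lower semicontinuity of $u\mapsto[u]_K^p$ gives $[u]_K^p\le\liminf_n\lambda(g_n)$. The key step is to pass to the limit in the constraint: $\int_\Omega g_n u_n^p\,dx\to\int_\Omega g u^p\,dx$. This is where the main work lies, since it is a product of a weakly* convergent factor $g_n$ and a strongly convergent factor $u_n^p$; I would split $\int_\Omega g_n u_n^p\,dx-\int_\Omega g u^p\,dx = \int_\Omega g_n(u_n^p-u^p)\,dx + \int_\Omega(g_n-g)u^p\,dx$, bound the first term by $M\|u_n^p-u^p\|_1\to 0$ (using $u_n\to u$ in $L^p$, hence $u_n^p\to u^p$ in $L^1$ after a further subsequence, via an $L^{p^*_s}$-type bound to control the difference), and the second term tends to $0$ since $u^p\in L^1(\Omega)$ and $g_n\wc g$. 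Hence $\int_\Omega g u^p\,dx=1$, so in particular $u\neq 0$, and therefore $\lambda(g)\le [u]_K^p/\int_\Omega g u^p\,dx=[u]_K^p\le\liminf_n\lambda(g_n)$.

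Combining the two inequalities gives $\lim_n\lambda(g_n)=\lambda(g)$ along the chosen subsequence; since every subsequence of $(g_n)$ has a further subsequence along which this holds (and the limit $\lambda(g)$ is always the same), the full sequence converges, proving sequential weak* continuity. The main obstacle, as indicated, is the passage to the limit in the product $g_n u_n^p$; the resolution rests entirely on the \emph{compactness} of $\w\hookrightarrow L^p(\Omega)$, which upgrades the weak convergence of $(u_n)$ to strong $L^p$ convergence and thereby decouples it from the merely weak* convergent weights. A secondary technical care point is ensuring $u\neq 0$, i.e. that the normalization does not degenerate in the limit — handled precisely by the computation showing $\int_\Omega g u^p\,dx=1$.
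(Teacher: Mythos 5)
Your proposal is correct and follows essentially the same route as the paper: an upper bound via test functions and weak* convergence of $\int_\Omega g_n v^p\,dx$, then a lower bound by extracting a subsequence of the normalized eigenfunctions, using the compact embedding $\w\hookrightarrow L^p(\Omega)$ and weak lower semicontinuity of $[\cdot]_K^p$, with the product term $\int_\Omega g_n u_n^p\,dx$ handled by exactly the splitting you describe. The only cosmetic difference is in the upper bound, where the paper simply plugs in the fixed eigenfunction $u_g$ (whose normalization gives the limit $1$ directly) rather than taking an infimum over all admissible test functions, and your explicit subsequence-of-subsequences remark at the end is a welcome extra precaution that the paper leaves implicit.
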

\begin{proof}
Let $(g_n)$ be a sequence in $\gc$ s.t.\ $g_n\wc g$, and for simplicity denote $u_n=u_{g_n}$ for all $n\in\N$, and $u=u_g$. We need to prove that $\lambda(g_n)\to\lambda(g)$. Since $0\le g_n,g\le M$ in $\Omega$, and $u^p\in L^1(\Omega)\subset L^\infty(\Omega)^*$, we have
\[\lim_n\,\int_\Omega g_nu^p\,dx = \int_\Omega gu^p\,dx = 1.\]
By \eqref{peg} and normalization, we have for all $n\in\N$
\[\lambda(g_n) = \frac{[u_n]_K^p}{\int_\Omega g_nu_n^p\,dx} \le \frac{[u]_K^p}{\int_\Omega g_nu^p\,dx} = \frac{\lambda(g)}{\int_\Omega g_nu^p\,dx},\]
and the latter tends to $\lambda(g)$ as $n\to\infty$. Hence we have
\beq\label{esc1}
\limsup_n\,\lambda(g_n) \le \lambda(g).
\eeq
By \eqref{esc1}, $(u_n)$ is bounded in $\w$. By reflexivity and the compact embedding $\w\hookrightarrow L^p(\Omega)$, passing to a subsequence we have $u_n\rightharpoonup v$ in $\w$, $u_n\to v$ in $L^p(\Omega)$, and $u_n(x)\to v(x)$ for a.e.\ $x\in\Omega$, as $n\to\infty$. In particular, $v\ge 0$ in $\Omega$. By convexity we have
\[\liminf_n\,[u_n]_K^p \ge [v]_K^p.\]
Besides, by strong convergence in $L^p(\Omega)$ we have
\[\lim_n\,\int_\Omega gu_n^p\,dx = \int_\Omega gv^p\,dx.\]
Therefore, by H\"older's inequality and the uniform bound of $(g_n)$, for all $n\in\N$ we have
\begin{align*}
\Big|\int_\Omega g_n(u_n^p-v^p)\,dx\Big| &\le C\int_\Omega|u_n^p-v^p|\,dx \\
&\le C\int_\Omega(u_n^{p-1}+v^{p-1})|u_n-v|\,dx \\
&\le C\big[\|u_n\|_p^{p-1}+\|v\|_p^{p-1}\big]\|u_n-v\|_p,
\end{align*}
and the latter tends to $0$ as $n\to\infty$. So we have
\[\lim_n\,\int_\Omega g_nu_n^p\,dx = \int_\Omega gv^p\,dx.\]
From the relations above we deduce
\beq\label{esc2}
\liminf_n\,\lambda(g_n) \ge \frac{[v]_K^p}{\int_\Omega gv^p\,dx} \ge \lambda(g).
\eeq
Finally, \eqref{esc1} and \eqref{esc2} imply $\lambda(g_n)\to\lambda(g)$.
\end{proof}

\noindent
As seen in Subsection \ref{ss22}, Lemma \ref{esc} along with compactness of $\gc$, proves that $\lambda(g)$ admits a minimizer and a maximizer in $\gc$. We next need to ensure that at least one minimizer lies in the smaller set $\g$. With this aim in mind, we rephrase the problem as follows. For all $g\in\gc$ set
\[\Phi(g) = \frac{1}{\lambda(g)^2},\]
so that
\[\min_{g\in\g}\,\lambda(g) \ \Longleftrightarrow \ \max_{g\in\g}\,\Phi(g).\]
By Lemma \ref{esc}, the functional $\Phi$ is sequentially weakly* continuous in $\gc$. In the next lemmas we will investigates further properties of $\Phi$:

\begin{lemma}\label{ecv}
The functional $\Phi$ is strictly convex in $\gc$.
\end{lemma}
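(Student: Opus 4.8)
I want to show that $\Phi(g)=1/\lambda(g)^2$ is strictly convex on $\gc$. The natural route is to express $1/\lambda(g)$ directly as a supremum of affine (indeed linear) functionals of $g$, so that $1/\lambda(g)$ is convex, and then compose with the increasing convex function $t\mapsto t^2$ on $[0,\infty)$ (note $1/\lambda(g)>0$), handling strictness by a separate direct argument. From the variational characterization \eqref{peg}, for any fixed $u\neq 0$ in $\w$ we have $\lambda(g)\le [u]_K^p/\int_\Omega g|u|^p\,dx$, hence $\int_\Omega g|u|^p\,dx\le [u]_K^p/\lambda(g)$; taking the supremum over $u$ normalized by $[u]_K^p=1$ gives
\[\frac{1}{\lambda(g)}=\sup_{[u]_K^p=1}\int_\Omega g|u|^p\,dx.\]
For each such $u$, the map $g\mapsto\int_\Omega g|u|^p\,dx$ is linear in $g$, so $1/\lambda(g)$ is a supremum of linear functionals, hence convex on $\gc$. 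Composing with $t\mapsto t^2$, which is increasing and convex on $[0,\infty)$, yields convexity of $\Phi$.

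\textbf{Strictness.} The composition argument alone only gives convexity, not strict convexity, so the heart of the matter is to upgrade it. Fix $g_0,g_1\in\gc$ with $g_0\neq g_1$, let $g_\tau=(1-\tau)g_0+\tau g_1$ for $\tau\in(0,1)$, and write $\mu(g)=1/\lambda(g)$, so $\mu$ is convex and $\Phi=\mu^2$. I would first argue that $\mu(g_\tau)\ge(1-\tau)\mu(g_0)+\tau\mu(g_1)$ can only be an equality for all $\tau$ if the maximizer is common: let $u_\tau=u_{g_\tau}$ be the normalized eigenfunction, so $\mu(g_\tau)=\int_\Omega g_\tau u_\tau^p\,dx=(1-\tau)\int_\Omega g_0 u_\tau^p\,dx+\tau\int_\Omega g_1 u_\tau^p\,dx\le(1-\tau)\mu(g_0)+\tau\mu(g_1)$, using $\int_\Omega g_i u_\tau^p\,dx\le\mu(g_i)$ from \eqref{peg} applied to $u=u_\tau$. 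If equality holds at some $\tau\in(0,1)$, then $\int_\Omega g_i u_\tau^p\,dx=\mu(g_i)$ for $i=0,1$, i.e.\ $u_\tau$ (after renormalization) is a minimizer in the Rayleigh quotient for both $g_0$ and $g_1$; since $u_\tau>0$ in $\Omega$ by simplicity and Lemma \ref{smp}, and since being a minimizer means $u_\tau$ is a positive $\lambda(g_i)$-eigenfunction of $\lk u=\lambda g_i|u|^{p-2}u$, we get $\lk u_\tau=\lambda(g_0)g_0 u_\tau^{p-1}=\lambda(g_1)g_1 u_\tau^{p-1}$ pointwise. Combined with $\lambda(g_0)=\lambda(g_1)$ (which follows since $\mu(g_0)=\mu(g_\tau)=\mu(g_1)$ once equality propagates, or more directly from the equality case forcing $\mu$ affine on $[0,1]$) and $u_\tau>0$ a.e., this yields $g_0=g_1$ a.e.\ in $\Omega$, a contradiction. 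Hence $\mu$ is \emph{strictly} convex along the segment.

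\textbf{From strict convexity of $\mu$ to strict convexity of $\Phi$.} Finally, with $\mu$ strictly convex and positive-valued and $t\mapsto t^2$ strictly increasing and strictly convex on $(0,\infty)$, the composition $\Phi=\mu^2$ is strictly convex: for $g_0\neq g_1$ and $\tau\in(0,1)$,
\[\Phi(g_\tau)=\mu(g_\tau)^2<\big((1-\tau)\mu(g_0)+\tau\mu(g_1)\big)^2\le(1-\tau)\mu(g_0)^2+\tau\mu(g_1)^2=(1-\tau)\Phi(g_0)+\tau\Phi(g_1),\]
where the first inequality is strict convexity of $\mu$ plus monotonicity of squaring, and the second is convexity of squaring. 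This completes the proof.

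\textbf{Anticipated obstacle.} The routine part is the representation of $1/\lambda(g)$ as a sup of linear functionals and the composition bookkeeping; the delicate point is the strictness argument, specifically justifying that equality in the convexity inequality for $\mu$ forces the \emph{same} eigenfunction to be optimal for both endpoint weights and then squeezing out $g_0=g_1$. This relies essentially on simplicity and positivity of the principal eigenfunction (from \cite{FP} and Lemma \ref{smp}) and on reading the equality condition in \eqref{peg} carefully; one must also make sure the normalization $\int_\Omega g_\tau u_\tau^p\,dx=1$ versus $[u_\tau]_K^p=1$ is tracked consistently when passing between the two forms of the Rayleigh quotient. I expect this to be the only place where more than a line or two of genuine argument is needed.
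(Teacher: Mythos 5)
Your route is genuinely different from the paper's: you represent $\mu(g)=1/\lambda(g)$ as a supremum of functionals that are linear in $g$, over the \emph{fixed} constraint set $\{[u]_K^p=1\}$, and then compose with $t\mapsto t^2$; the paper instead works with $\Phi(g)=\sup_{u\in\w_+\setminus\{0\}}\bigl(2\int_\Omega gu^p\,dx-[u]_K^{2p}\bigr)$, whose unique maximizer $\tilde u_g=u_g/\lambda(g)^{2/p}$ carries a $g$-dependent normalization. That difference matters exactly at the one place where your argument has a genuine gap. In the equality-case analysis you correctly deduce that $u_\tau$ is optimal for both endpoint weights, hence $\lambda(g_0)g_0u_\tau^{p-1}=\lambda(g_1)g_1u_\tau^{p-1}$ a.e.\ and, since $u_\tau>0$, $\lambda(g_0)g_0=\lambda(g_1)g_1$ a.e.\ in $\Omega$. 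But your justification of $\lambda(g_0)=\lambda(g_1)$ is wrong: equality in the convexity inequality at an interior point forces $\mu$ to be \emph{affine} along the segment, not constant, so it does not yield $\mu(g_0)=\mu(g_\tau)=\mu(g_1)$. As written, you have only shown $g_1=cg_0$ with $c=\lambda(g_0)/\lambda(g_1)>0$, which by itself does not contradict $g_0\neq g_1$.

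The gap is easy to close, but it needs an input you never invoke: every element of $\gc$ has the same integral over $\Omega$ (equal to that of the generating function of the class, which is positive), because rearrangement preserves the distribution function and hence the integral, and this passes to weak$^*$ limits by testing against $1\in L^1(\Omega)$. Integrating $\lambda(g_0)g_0=\lambda(g_1)g_1$ over $\Omega$ then gives $\lambda(g_0)=\lambda(g_1)$ and so $g_0=g_1$, the desired contradiction. Note that this is precisely the step the paper's choice of $F$ is engineered to avoid: since $[\tilde u_g]_K^{2p}=1/\lambda(g)^2$, equality of the unique maximizers $\tilde u_{g_1}=\tilde u_{g_2}$ already encodes $\lambda(g_1)=\lambda(g_2)$ without any appeal to the structure of $\gc$ beyond convexity. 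The remainder of your proof --- the sup-of-linear-functionals argument for convexity of $\mu$, the composition with the increasing convex square, and the upgrade from strict convexity of $\mu$ to strict convexity of $\Phi=\mu^2$ --- is correct.
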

\begin{proof}
We introduce an alternative expression for $\Phi$. For all $g\in\gc$, $u\in\w_+$ set
\[F(g,u) = 2\int_\Omega gu^p\,dx-[u]_K^{2p}.\]
We fix $g\in\gc$ and maximize $F(g,\cdot)$ over positive functions. For all $u\in\w_+\setminus\{0\}$ and $\tau>0$, the function
\[F(g,\tau u) = 2\tau^p\int_\Omega gu^p\,dx-\tau^{2p}[u]_K^p\]
is differentiable in $\tau$ with derivative
\[\frac{\partial}{\partial\tau}F(g,\tau u) = 2p\tau^{p-1}\int_\Omega gu^p\,dx-2p\tau^{2p-1}[u]_K^{2p}.\]
So the maximum of $\tau\mapsto F(g,\tau u)$ is attained at
\[\tau_0 = \frac{\Big[\int_\Omega gu^p\,dx\Big]^\frac{1}{p}}{[u]_K^2} = \lambda(g)^{-\frac{2}{p}} > 0,\]
and amounts at
\[F(g,\tau_0u) = \frac{\Big[\int_\Omega gu^p\,dx\Big]^2}{[u]_K^{2p}}.\]
Maximizing further over $u$, we obtain
\[\sup_{u>0}\,F(g,u) = \sup_{u\in\w_+\setminus\{0\}}\,\frac{\Big[\int_\Omega gu^p\,dx\Big]^2}{[u]_K^{2p}}.\]
Noting that $[|u|]_K\le[u]_K$ for all $u\in\w$, and recalling \eqref{peg}, we have for all $g\in\gc$
\beq\label{ecv1}
\Phi(g) = \sup_{u\in\w_+\setminus\{0\}}\,F(g,u) = \frac{1}{\lambda(g)^2}.
\eeq
We claim that supremum in \eqref{ecv1} is attained at the unique function
\beq\label{ecv2}
\tilde u_g = \frac{u_g}{\lambda(g)^\frac{2}{p}}.
\eeq
Indeed, by normalization of $u_g$ we have
\[F(g,\tilde u_g) = \frac{2}{\lambda(g)^2}\,\int_\Omega gu_g^p\,dx-\frac{[u_g]_K^{2p}}{\lambda(g)^4} = \frac{1}{\lambda(g)^2}.\]
For uniqueness, first consider a function $u=\tau u_g$ for some $\tau\neq\tau_0$. By the argument above we have
\[F(g,\tau u_g) < F(g,\tau_0 u_g) = F(g,\tilde u_g) = \frac{1}{\lambda(g)^2}.\]
Besides, for all $v\in\w_+\setminus\{0\}$ which is not a $\lambda(g)$-eigenfunction, arguing as above with $v$ replacing $u$, and recalling that the infimum in \eqref{peg} is attained only at principal eigenfunctions, we have
\[F(g,v) \le F(g,\tau_0v) = \frac{\Big[\int_\Omega gv^p\,dx\Big]^2}{[v]_K^{2p}} < \frac{1}{\lambda(g)^2}.\]
So, $\tilde u_g$ is the unique maximizer of \eqref{ecv1}.
\vskip2pt
\noindent
We now prove that $\Phi$ is convex. Let $g_1,g_2\in\gc$, $\tau\in(0,1)$ and set
\[g_\tau = (1-\tau)g_1+\tau g_2,\]
so $g_\tau\in\gc$ (a convex set, as seen in Subsection \ref{ss21}). For all $u\in\w_+\setminus\{0\}$ we have by \eqref{ecv1}
\begin{align*}
F(g_\tau,u) &= 2(1-\tau)\int_\Omega g_1u^p\,dx+2\tau\int_\Omega g_2u^p\,dx-[u]_K^{2p} \\
&= (1-\tau)F(g_1,u)+\tau F(g_2,u) \le (1-\tau)\Phi(g_1)+\tau\Phi(g_2).
\end{align*}
Taking the supremum over $u$ and using \eqref{ecv1} again,
\[\Phi(g_\tau) \le (1-\tau)\Phi(g_1)+\tau\Phi(g_2).\]
To prove that $\Phi$ is {\em strictly} convex, we argue by contradiction, assuming that for some $g_1\neq g_2$ as above and $\tau\in(0,1)$
\[\Phi(g_\tau) = (1-\tau)\Phi(g_1)+\tau\Phi(g_2).\]
Set $\tilde u_i=\tilde u_{g_i}$ ($i=1,2$) and $\tilde u_\tau=\tilde u_{g_\tau}$ for brevity. Then, by \eqref{ecv1} and the equality above
\[(1-\tau)F(g_1,\tilde u_\tau)+\tau F(g_2,\tilde u_\tau) = (1-\tau)F(g_1,\tilde u_1)+\tau F(g_2,\tilde u_2).\]
Recalling that $\tilde u_i$ is the only maximizer of $F(g_i,\cdot)$, the last inequality implies $\tilde u_1=\tilde u_2=\tilde u_\tau$, as well as
\[\Phi(g_1) = F(g_1,\tilde u_\tau) = F(g_2,\tilde u_\tau) = \Phi(g_2).\]
Therefore we have $\lambda(g_1)=\lambda(g_2)=\lambda$. Moreover, $\tilde u_\tau>0$ is a $\lambda$-eigenfunction with both weight $g_1$, $g_2$, i.e., for all $\varphi\in\w$
\[\lambda\int_\Omega g_1\tilde u_\tau^{p-1}\varphi\,dx = \langle\lk\tilde u_\tau,\varphi\rangle = \lambda\int_\Omega g_2\tilde u_\tau^{p-1}\varphi\,dx.\]
So $g_1\tilde u_\tau^{p-1}=g_2\tilde u_\tau^{p-1}$ in $\Omega$, which in turn, since $\tilde u_\tau>0$, implies $g_1=g_2$ a.e.\ in $\Omega$, a contradiction.
\end{proof}

\noindent
The next lemma establishes differentiability of $\Phi$:

\begin{lemma}\label{egd}
The functional $\Phi$ is G\^ateaux differentiable in $\gc$, and for all $g,h\in\gc$
\[\langle\Phi'(g),h-g\rangle = 2\int_\Omega(h-g)\tilde u_g^p\,dx,\]
where $\tilde u_g$ is the principal eigenfunction normalized as in \eqref{ecv2}.
\end{lemma}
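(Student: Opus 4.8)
The plan is to compute the one-sided directional derivative of $\Phi$ at $g$ in the direction $h-g$ directly from the variational characterization \eqref{ecv1}, exploiting that the supremum there is attained at the unique point $\tilde u_g$. Fix $g,h\in\gc$, write $g_\tau=g+\tau(h-g)\in\gc$ for $\tau\in[0,1]$, and abbreviate $\tilde u_\tau=\tilde u_{g_\tau}$, $\tilde u_0=\tilde u_g$. The key identity is the affinity of $g\mapsto F(g,u)$ observed in the proof of Lemma \ref{ecv}: for every fixed $u\in\w_+\setminus\{0\}$,
\[F(g_\tau,u) = F(g,u)+2\tau\int_\Omega(h-g)u^p\,dx.\]
From this, on one hand $\Phi(g_\tau)\ge F(g_\tau,\tilde u_0)=\Phi(g)+2\tau\int_\Omega(h-g)\tilde u_0^p\,dx$, giving a lower bound for the difference quotient; on the other hand $\Phi(g_\tau)=F(g_\tau,\tilde u_\tau)=F(g,\tilde u_\tau)+2\tau\int_\Omega(h-g)\tilde u_\tau^p\,dx\le\Phi(g)+2\tau\int_\Omega(h-g)\tilde u_\tau^p\,dx$, giving an upper bound. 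Hence, for $\tau>0$,
\[2\int_\Omega(h-g)\tilde u_0^p\,dx \le \frac{\Phi(g_\tau)-\Phi(g)}{\tau} \le 2\int_\Omega(h-g)\tilde u_\tau^p\,dx.\]

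The remaining task is to show $\int_\Omega(h-g)\tilde u_\tau^p\,dx\to\int_\Omega(h-g)\tilde u_0^p\,dx$ as $\tau\to 0^+$, which squeezes the difference quotient to the claimed limit; the linear functional $h\mapsto\langle\Phi'(g),h-g\rangle=2\int_\Omega(h-g)\tilde u_g^p\,dx$ is then well-defined on $L^\infty(\Omega)$ (indeed on $L^1(\Omega)$, since $\tilde u_g^p\in L^\infty(\Omega)$ by Lemma \ref{apb}), so $\Phi'(g)\in L^\infty(\Omega)^*$ as required. For the convergence, first note that $g_\tau\wc g$ in $L^\infty(\Omega)$ as $\tau\to 0^+$ (in fact $g_\tau\to g$ strongly in every $L^q$, $q<\infty$), so by Lemma \ref{esc} we have $\lambda(g_\tau)\to\lambda(g)$; in particular $([u_{g_\tau}]_K)$ stays bounded, hence $(\tilde u_\tau)$ is bounded in $\w$. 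By reflexivity and the compact embedding $\w\hookrightarrow L^p(\Omega)$, along any subsequence we may extract a further subsequence with $\tilde u_\tau\rightharpoonup w$ in $\w$ and $\tilde u_\tau\to w$ in $L^p(\Omega)$ with pointwise a.e.\ convergence; passing to the limit in the weak formulation of the eigenvalue equation (using $g_\tau u_{g_\tau}^{p-1}\to g\,w^{p-1}\lambda(g)^{(p-1)\cdot 2/p}$ type arguments exactly as in the proof of Lemma \ref{esc}, together with $\lambda(g_\tau)\to\lambda(g)$) identifies $w$ as a nonnegative normalized $\lambda(g)$-eigenfunction with weight $g$, hence $w=\tilde u_g$ by simplicity of the principal eigenvalue. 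Since the limit is the same along every subsequence, the whole net $\tilde u_\tau\to\tilde u_g$ in $L^p(\Omega)$. Finally $\tilde u_\tau^p\to\tilde u_g^p$ in $L^1(\Omega)$ by the elementary estimate $\||\tilde u_\tau^p-\tilde u_g^p|\|_1\le C(\|\tilde u_\tau\|_p^{p-1}+\|\tilde u_g\|_p^{p-1})\|\tilde u_\tau-\tilde u_g\|_p$ used in Lemma \ref{esc}, so $\int_\Omega(h-g)\tilde u_\tau^p\,dx\to\int_\Omega(h-g)\tilde u_g^p\,dx$ because $h-g\in L^\infty(\Omega)$.

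The main obstacle is the convergence $\tilde u_\tau\to\tilde u_g$: it is not an assumption but must be extracted from compactness plus uniqueness of the principal eigenfunction, and one must be careful that it holds along the full family $\tau\to 0^+$ rather than merely a subsequence — this is what licenses the squeeze above. Everything else (the two-sided bound on the difference quotient, the boundedness of $\tilde u_g$) is immediate from the affine structure of $F(\cdot,u)$ and the results already established in Lemmas \ref{apb}, \ref{esc}, and \ref{ecv}. Note also that this computation only gives the derivative as $\tau\to 0^+$ along admissible directions $h-g$ with $h\in\gc$, which is exactly the notion of G\^ateaux differentiability on $\gc$ defined in Subsection \ref{ss21}; convexity of $\gc$ ensures $g_\tau\in\gc$ throughout, and convexity of $\Phi$ (Lemma \ref{ecv}) guarantees the difference quotient is monotone in $\tau$, so the limit exists a priori and the squeeze merely identifies its value.
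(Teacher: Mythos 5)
Your overall strategy coincides with the paper's: the two-sided bound on the difference quotient coming from the affine dependence of $F(\cdot,u)$ on the weight is exactly the paper's inequality \eqref{egd2}, and the remaining work is in both cases the convergence $\tilde u_{g_\tau}\to\tilde u_g$ in $L^p(\Omega)$, obtained from boundedness in $\w$, compact embedding, and identification of the limit along subsequences. The one place where you diverge is the identification step, and there your justification as written does not hold up. You propose to pass to the limit in the weak formulation of the eigenvalue equation, claiming this is done ``exactly as in the proof of Lemma \ref{esc}''; but Lemma \ref{esc} never passes to the limit in the equation --- it works entirely with the Rayleigh quotient. More importantly, under mere weak convergence $\tilde u_\tau\rightharpoonup w$ in $\w$ the convergence of the nonlinear nonlocal term $\langle\lk\tilde u_\tau,\varphi\rangle\to\langle\lk w,\varphi\rangle$ is not automatic: one must either prove strong convergence in $\w$ via an $(S)_+$-type argument, or invoke the fact that the integrands $|\tilde u_\tau(x)-\tilde u_\tau(y)|^{p-2}(\tilde u_\tau(x)-\tilde u_\tau(y))K(x,y)^{1/p'}$ are bounded in $L^{p'}(\R^{2N})$ and converge a.e., hence weakly, to the right limit. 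Neither argument appears in your sketch, so as stated this step is a gap (a repairable one, but a gap).

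The paper sidesteps this entirely by identifying the limit through the variational characterization \eqref{ecv1}: using $\Phi(g_\tau)\to\Phi(g)$ (Lemma \ref{esc}), weak lower semicontinuity of $[\cdot]_K^{2p}$, and the convergence $\int_\Omega g_\tau\tilde u_\tau^p\,dx\to\int_\Omega g w^p\,dx$, one gets $F(g,w)\ge\Phi(g)$, so $w$ maximizes $F(g,\cdot)$ and equals $\tilde u_g$ by the uniqueness of the maximizer already established in Lemma \ref{ecv}. All the ingredients for this cleaner route are explicitly at your disposal, so you should replace the PDE-based identification with it. The rest of your argument (the squeeze, the boundedness of $(\tilde u_\tau)$ via $[\tilde u_\tau]_K^{2p}=\lambda(g_\tau)^{-2}$, the subsequence principle giving convergence of the full family, the passage from $L^p$ convergence of $\tilde u_\tau$ to $L^1$ convergence of $\tilde u_\tau^p$, and the observation that $2\tilde u_g^p\in L^1(\Omega)\subset L^\infty(\Omega)^*$) is correct and matches the paper.
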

\begin{proof}
First, let $(g_n)$ be a sequence in $\gc$ s.t.\ $g_n\wc g$, and set for brevity $\tilde u_n=\tilde u_{g_n}$, $\tilde u=\tilde u_g$. We claim that
\beq\label{egd1}
\lim_n\,\int_\Omega|\tilde u_n-\tilde u|^p\,dx = 0.
\eeq
Indeed, by normalization we have for all $n\in\N$
\[[\tilde u_n]_K^{2p} = \frac{1}{\lambda(g_n)^2},\]
and the latter is bounded from above, since $\lambda(g)$ has a minimizer in $\gc$. So, $(\tilde u_n)$ is bounded in $\w$. By uniform convexity and the compact embedding $\w\hookrightarrow L^p(\Omega)$, passing to a subsequence we have $\tilde u_n\rightharpoonup v$ in $\w$, $\tilde u_n\to v$ in $L^p(\Omega)$, and $\tilde u_n(x)\to v(x)$ for a.e.\ $x\in\Omega$, as $n\to\infty$ (in particular $v\ge 0$ in $\Omega$). As in Lemma \ref{esc} we see that
\[\liminf_n\,[\tilde u_n]_K^{2p} \ge [v]_K^{2p},\]
and
\[\lim_n\,\int_\Omega g_n\tilde u_n^p\,dx = \int_\Omega gv^p\,dx.\]
By Lemma \ref{esc} we have $\Phi(g_n)\to\Phi(g)$, so by \eqref{ecv1} we get
\begin{align*}
\Phi(g) &= \lim_n\,F(g_n,\tilde u_n) \\
&\le 2\lim_n\,\int_\Omega g_n\tilde u_n^p\,dx-\liminf_n\,[\tilde u_n]_K^{2p} \\
&\le 2\int_\Omega gv^p\,dx-[v]_K^{2p} \\
&= F(g,v) \le \Phi(g).
\end{align*}
Therefore $v$ is a maximizer of $F(g,\cdot)$ over $\w_+$, hence by uniqueness $v=\tilde u$. Then we have $\tilde u_n\to\tilde u$ in $L^p(\Omega)$, which is equivalent to \eqref{egd1}.
\vskip2pt
\noindent
We claim that for all $n\in\N$
\beq\label{egd2}
\Phi(g)+2\int_\Omega(g_n-g)\tilde u^p\,dx \le \Phi(g_n) \le \Phi(g)+2\int_\Omega(g_n-g)\tilde u_n^p\,dx.
\eeq
Indeed, by \eqref{ecv1} we have
\begin{align*}
\Phi(g)+2\int_\Omega(g_n-g)\tilde u^p\,dx &\le \Phi(g_n) \\
&= F(g,\tilde u_n)+2\int_\Omega(g_n-g)\tilde u_n^p\,dx \\
&\le \Phi(g)+2\int_\Omega(g_n-g)\tilde u_n^p\,dx.
\end{align*}
Now fix $g,h\in\gc$, $g\neq h$, and a sequence $(\tau_n)$ in $(0,1)$ s.t.\ $\tau_n\to 0$. By convexity of $\gc$, we have for all $n\in\N$
\[g_n = g+\tau_n(h-g) \in \gc.\]
Also, clearly $g_n\wc g$. By \eqref{egd2}, setting as usual $\tilde u_n=\tilde u_{g_n}$ and $\tilde u=\tilde u_g$, we have for all $n\in\N$
\[2\tau_n\int_\Omega(h-g)\tilde u^p\,dx \le \Phi(g_n)-\Phi(g) \le 2\tau_n\int_\Omega(h-g)\tilde u_n^p\,dx.\]
Dividing by $\tau_n>0$ and recalling \eqref{egd1}, we get
\[\lim_n\,\frac{\Phi(g+\tau_n(h-g))-\Phi(g)}{\tau_n} = 2\int_\Omega(h-g)\tilde u^p\,dx.\]
Note that $2\tilde u^p\in L^1(\Omega)\subset L^\infty(\Omega)^*$, and by arbitrariness of the sequence $(\tau_n)$ we deduce that $\Phi$ is G\^ateaux differentiable at $g$ with
\[\langle\Phi'(g),h-g\rangle = 2\int_\Omega(h-g)\tilde u^p\,dx,\]
which concludes the proof.
\end{proof}

\noindent
We can now prove the main result of this section:
\vskip2pt
\noindent
{\em Proof of Theorem \ref{min}.} We already know that $\Phi$ has a maximizer $\bar g$ over $\gc$. Set $\bar w=2\tilde u_{\bar g}^p\in L^1(\Omega)$, then by Lemma \ref{egd} we have $\Phi'(\bar g)=\bar w$. Now we maximize on $\gc$ the linear functional
\[g \mapsto \int_\Omega g\bar w\,dx.\]
By Lemma \ref{lin} \ref{lin1}, there exists $\hat g\in\g$ s.t.\ for all $g\in\gc$
\[\int_\Omega\hat g\bar w\,dx \ge \int_\Omega g\bar w\,dx.\]
In particular we have
\beq\label{min4}
\int_\Omega\hat g\bar w\,dx \ge \int_\Omega \bar g\bar w\,dx.
\eeq
By Lemma \ref{ecv}, the functional $\Phi$ is convex. Therefore, using also Lemma \ref{egd} and \eqref{min4}, we have
\[\Phi(\hat g) \ge \Phi(\bar g)+\int_\Omega(\hat g-\bar g)\bar w\,dx \ge \Phi(\bar g).\]
Thus, $\hat g\in\g$ is as well a maximizer of $\Phi$ over $\gc$, which proves \ref{min1} since maximizers of $\Phi$ and minimizers of $\lambda(g)$ coincide. In addition, by the relation above we have
\[\int_\Omega(\hat g-\bar g)\bar w\,dx = 0.\]
We will now prove that $\hat g=\bar g$, arguing by contradiction. Assume $\hat g\neq\bar g$, then by strict convexity of $\Phi$ (Lemma \ref{ecv} again) we have
\[\Phi(\hat g) > \Phi(\bar g)+\int_\Omega(\hat g-\bar g)\bar w\,dx = \Phi(\bar g),\]
against maximality of $\bar g$. So, any maximizer of $\Phi$ over $\gc$ actually lies in $\g$, which proves \ref{min2}. Finally, let $\hat g\in\g$ be any maximizer of $\Phi$ and set $\hat w=2\tilde u_{\hat g}^p\in L^1(\Omega)$. By Lemmas \ref{ecv} and \ref{egd}, for all $g\in\gc\setminus\{\hat g\}$ we have
\[\Phi(\hat g) \ge \Phi(g) > \Phi(\hat g)+\int_\Omega(g-\hat g)\hat w\,dx,\]
hence
\[\int_\Omega\hat g\hat w\,dx > \int_\Omega g\hat w\,dx.\]
Equivalently, $\hat g$ is the only maximizer over $\gc$ of the linear functional above, induced by the function $\hat w$. By Lemma \ref{lin} \ref{lin2}, there exists a nondecreasing map $\tilde\eta:\R\to\R$ s.t.\ in $\Omega$
\[\hat g = \tilde\eta\circ\hat w.\]
Now we recall \eqref{ecv2} and the definition of $\hat w$, and by setting for all $t\ge 0$
\[\eta(t) = \tilde\eta\Big(\frac{2t^p}{\lambda(\hat g)^2}\Big),\]
while $\eta(t)=\eta(0)$ for all $t<0$, we immediately see that $\eta:\R\to\R$ is a nondecreasing map s.t.\ $\hat g=\eta\circ u_{\hat g}$ in $\Omega$, thus proving \ref{min3}. \qed

\section{Maximization of the energy functional}\label{sec4}

\noindent
In this section we deal with problem \eqref{dpg}. Again, $\Omega\subset\R^N$ is a bounded domain with $C^{1,1}$-smooth boundary, $p>1$, $s\in(0,1)$, and $K:\R^N\times\R^N\to\R$ is a measurable kernel satisfying $(K_1)$, $(K_2)$. Also, $h:\Omega\times\R\to\R_+$ is a Carath\'eodory mapping satisfying the following conditions:
\begin{itemize}[leftmargin=1cm]
\item[$(h_1)$] $h(x,\cdot)$ is nondecreasing in $\R$ for a.e.\ $x\in\Omega$;
\item[$(h_2)$] $h(x,t)\le C_0(1+|t|^{q-1})$ for a.e.\ $x\in\Omega$ and all $t\in\R$, with $C_0>0$ and $q\in(1,p)$.
\end{itemize}
Finally, $g$ lies in $\gc$, where $\g$ is the rearrangement class based on some function $g_0\in L^\infty(\Omega)_+$. As in Subsection \ref{ss22}, a weak solution $u\in\w$ of \eqref{dpg} satisfies for all $\varphi\in\w$
\[\langle\lk(u),\varphi\rangle+\int_\Omega h(x,u)\varphi\,dx = \int_\Omega g\varphi\,dx.\]
By classical results (see for instance \cite{IM2} for the fractional $p$-Laplacian), for all $g\in\gc$ problem \eqref{dpg} has a unique solution $u_g\in\w$. In addition, by Lemma \ref{apb} we have $u_g\in L^\infty(\Omega)$.
\vskip2pt
\noindent
We will now define an energy functional. For all $(x,t)\in\Omega\times\R$ set
\[H(x,t) = \int_0^t h(x,\tau)\,d\tau,\]
and for all $g\in\gc$, $u\in\w$ define
\[E(g,u) = \int_\Omega\big[gu-H(x,u)\big]\,dx-\frac{[u]_K^p}{p}.\]
It is well known that the solution $u_g$ of \eqref{dpg} is the only maximizer of $E(g,\cdot)$ in $\w$. We set
\beq\label{phi}
\Phi(g) = \sup_{u\in\w}\,E(g,u) = E(g,u_g),
\eeq
and we study the maximization problem
\[\max_{g\in\g}\,\Phi(g).\]
We will prove that $\Phi$ admits at least one maximizer in $\g$, which actually maximizes $\Phi$ over $\gc$. Also, all maximizers over $\gc$ lie in $\g$, and any maximizer $g$ can be represented as a nondecreasing function of the corresponding solution $u_g$:

\begin{theorem}\label{max}
Let $\Omega\subset\R^N$ be a bounded domain with $C^{1,1}$-boundary, $p>1$, $s\in(0,1)$, $K:\R^N\times\R^N\to\R$ be measurable satisfying $(K_1)$, $(K_2)$, $g_0\in L^\infty(\Omega)_+\setminus\{0\}$, $\g$ be the rearrangement class of $g_0$, and $\Phi(g)$ be defined by \eqref{phi} for all $g\in\gc$. Then:
\begin{enumroman}
\item\label{max1} there exists $\hat g\in\g$ s.t.\ $\Phi(\hat g)\ge\Phi(g)$ for all $g\in\g$;
\item\label{max2} for all $\hat g$ as in \ref{max1} and $g\in\gc\setminus\g$, $\Phi(\hat g)>\Phi(g)$;
\item\label{max3} for all $\hat g$ as in \ref{max1} there exists a nondecreasing map $\eta:\R\to\R$ s.t.\ $\hat g=\eta\circ u_{\hat g}$ in $\Omega$.
\end{enumroman}
\end{theorem}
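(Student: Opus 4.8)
The strategy mirrors the one used for Theorem \ref{min}: establish weak* continuity, convexity, and Gâteaux differentiability of $\Phi$ on the convex compact set $\gc$, so that a maximizer $\bar g\in\gc$ exists, then use the linear optimization Lemma \ref{lin} applied to the derivative $\Phi'(\bar g)$ to produce a genuine rearrangement $\hat g\in\g$, and finally exploit strict convexity to force $\hat g=\bar g$ and to obtain the representation formula. First I would prove that $\Phi$ is sequentially weakly* continuous on $\gc$: if $g_n\wc g$, using $E(g_n,u_g)\le\Phi(g_n)$ and the weak* convergence tested against $u_g\in L^1(\Omega)$ one gets $\Phi(g)\le\liminf_n\Phi(g_n)$; for the reverse, the bound $\Phi(g_n)=E(g_n,u_n)$ together with $(h_1)$–$(h_2)$ (the sublinear growth $q<p$ guarantees coercivity of $E(g_n,\cdot)$ uniformly in $n$) shows $(u_n)$ is bounded in $\w$, so up to a subsequence $u_n\rightharpoonup v$ in $\w$ and $u_n\to v$ in $L^q(\Omega)$ and a.e.; lower semicontinuity of $[\cdot]_K^p$, the weak* convergence of $g_n$ against $v^p$ (actually against $v$), and continuity of $\int_\Omega H(x,u_n)\,dx$ under $L^q$-convergence yield $\limsup_n\Phi(g_n)\le E(g,v)\le\Phi(g)$.

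Next I would record the variational identity
\[
\Phi(g)=\sup_{u\in\w}\Big\{\int_\Omega gu\,dx-\int_\Omega H(x,u)\,dx-\tfrac{[u]_K^p}{p}\Big\},
\]
which exhibits $\Phi$ as a supremum of functions that are \emph{affine} in $g$; hence $\Phi$ is convex on $\gc$. Strict convexity is the first place where care is needed: if $g_\tau=(1-\tau)g_1+\tau g_2$ and $\Phi(g_\tau)=(1-\tau)\Phi(g_1)+\tau\Phi(g_2)$, then evaluating the affine functionals at the unique maximizer $u_{g_\tau}$ forces $u_{g_1}=u_{g_2}=u_{g_\tau}=:u$; but then $u$ solves \eqref{dpg} with both data $g_1$ and $g_2$, so $\lk u+h(x,u)=g_1=g_2$ a.e.\ in $\Omega$, a contradiction — here, unlike in Lemma \ref{ecv}, no positivity of $u$ is needed, since the data are recovered directly. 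For Gâteaux differentiability I would argue exactly as in Lemma \ref{egd}: first show $u_{g_n}\to u_g$ in $L^q(\Omega)$ when $g_n\wc g$ (by the same compactness-plus-uniqueness argument as above, now using weak* continuity of $\Phi$ already proven), then sandwich
\[
\Phi(g)+\int_\Omega(g_n-g)u_g\,dx\le\Phi(g_n)\le\Phi(g)+\int_\Omega(g_n-g)u_{g_n}\,dx
\]
using that $u_g$ (resp.\ $u_{g_n}$) maximizes $E(g,\cdot)$ (resp.\ $E(g_n,\cdot)$), divide by $\tau_n$ along $g_n=g+\tau_n(h-g)$, and pass to the limit to get $\langle\Phi'(g),h-g\rangle=\int_\Omega(h-g)u_g\,dx$, noting $u_g\in L^\infty(\Omega)\subset L^1(\Omega)$ by Lemma \ref{apb}.

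Finally, I would assemble the pieces as in the proof of Theorem \ref{min}. Let $\bar g\in\gc$ maximize $\Phi$ and set $\bar w=u_{\bar g}\in L^1(\Omega)$, so $\Phi'(\bar g)=\bar w$ by the formula above. Apply Lemma \ref{lin}\,\ref{lin1} to $\bar w$ to get $\hat g\in\g$ with $\int_\Omega\hat g\bar w\,dx\ge\int_\Omega g\bar w\,dx$ for all $g\in\gc$, in particular $\ge\int_\Omega\bar g\bar w\,dx$; convexity of $\Phi$ gives $\Phi(\hat g)\ge\Phi(\bar g)+\int_\Omega(\hat g-\bar g)\bar w\,dx\ge\Phi(\bar g)$, so $\hat g$ is also a maximizer over $\gc$, proving \ref{max1}. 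Then $\int_\Omega(\hat g-\bar g)\bar w\,dx=0$, and strict convexity forces $\hat g=\bar g$; since $\bar g$ was an arbitrary maximizer over $\gc$, every such maximizer lies in $\g$, proving \ref{max2}. For \ref{max3}, given any maximizer $\hat g\in\g$ set $\hat w=u_{\hat g}$; then for every $g\in\gc\setminus\{\hat g\}$, strict convexity plus $\Phi(\hat g)\ge\Phi(g)$ gives $\int_\Omega\hat g\hat w\,dx>\int_\Omega g\hat w\,dx$, so $\hat g$ is the \emph{unique} maximizer of the linear functional induced by $\hat w$, and Lemma \ref{lin}\,\ref{lin2} produces a nondecreasing $\eta:\R\to\R$ with $\hat g=\eta\circ u_{\hat g}$ in $\Omega$. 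The main obstacle I anticipate is the strict-convexity step: one must be sure that equality in the convexity inequality genuinely propagates to the maximizers (which requires the uniqueness of $u_g$ from \cite{IM2}) and that the resulting identity $g_1=g_2$ is legitimate — i.e.\ that the equation \eqref{dpg} determines the datum pointwise a.e., which it does since $\lk u+h(x,u)$ is a fixed element of $L^\infty(\Omega)$ once $u$ is fixed. A secondary technical point is to make the coercivity of $E(g,\cdot)$ uniform in $g\in\gc$, which follows from $0\le g\le M$, the fractional Sobolev embedding, and $(h_2)$ with $q<p$.
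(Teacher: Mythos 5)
Your proposal is correct and follows essentially the same route as the paper: weak* continuity, convexity as a supremum of affine functionals, strict convexity via uniqueness of $u_g$ and recovery of the datum from the equation, Gâteaux differentiability with $\langle\Phi'(g),k-g\rangle=\int_\Omega(k-g)u_g\,dx$, and then the combination of Lemma \ref{lin} with (strict) convexity exactly as in the proof of Theorem \ref{min}. The only cosmetic difference is that you obtain boundedness of $(u_{g_n})$ in $\w$ from uniform coercivity of $E(g_n,\cdot)$, whereas the paper tests the equation with $u_n$; both yield the same inequality $[u_n]_K^p\le C[u_n]_K+C[u_n]_K^q$ with $q<p$.
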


\noindent
Theorem \ref{max} is analogous to Theorem \ref{min} above, while in fact the problem is easier since we do not need to consider normalization to ensure uniqueness, unlike in problem \eqref{peg}, and the functional $\Phi$ is dealt with directly. On the other hand, in this case we have no information on the sign of the solution $u_g$.
\vskip2pt
\noindent
Again we examine separately the properties of $\Phi$ in some preparatory lemmas (the proofs, similar to those seen in Section \ref{sec3}, are included for completeness). We begin proving continuity of $\Phi$:

\begin{lemma}\label{dsc}
The functional $\Phi$ is sequentially weakly* continuous in $\gc$.
\end{lemma}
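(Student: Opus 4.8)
The plan is to show that if $g_n \wc g$ in $\gc$, then $\Phi(g_n) \to \Phi(g)$, where $\Phi(g) = E(g, u_g)$ and $u_g$ is the unique solution of \eqref{dpg}. The structure mirrors that of Lemma \ref{esc}: first establish a one-sided bound using the variational characterization \eqref{phi} as a supremum, then extract a convergent subsequence of solutions and recover the reverse inequality via lower semicontinuity of the seminorm term and strong $L^p$ (or $L^q$) convergence for the lower-order terms. Since the limit is shown to be independent of the subsequence (it equals $\Phi(g)$), the whole sequence converges.

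First I would prove $\limsup_n \Phi(g_n) \le \Phi(g)$ plus a uniform bound. Using $u_g$ as a test competitor in \eqref{phi} for $g_n$, we get $\Phi(g_n) \ge E(g_n, u_g) = \int_\Omega g_n u_g\,dx - \int_\Omega H(x,u_g)\,dx - [u_g]_K^p/p$, and since $u_g \in L^1(\Omega) \subset L^\infty(\Omega)^*$, we have $\int_\Omega g_n u_g\,dx \to \int_\Omega g u_g\,dx$, hence $\liminf_n \Phi(g_n) \ge E(g,u_g) = \Phi(g)$. For the matching upper bound and to get boundedness of $(u_{g_n})$ in $\w$, I would use the standard coercivity estimate: since $0 \le g_n \le M$ and, by $(h_2)$ and $H \ge 0$ (as $h \ge 0$ by $(h_1)$ combined with $h: \Omega \times \R \to \R_+$), one has $E(g_n, u) \le M\|u\|_1 - [u]_K^p/p \le C\|u\|_{\w} - [u]_K^p/p$, so $\Phi(g_n) = E(g_n, u_{g_n})$ forces $[u_{g_n}]_K^p \le C$ with $C$ independent of $n$, and also $\Phi(g_n) \le C$.

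Next, by reflexivity and the compact embedding $\w \hookrightarrow L^q(\Omega)$ (valid for $q \in (1,p) \subset (1, p^*_s)$), pass to a subsequence with $u_{g_n} \rightharpoonup v$ in $\w$, $u_{g_n} \to v$ in $L^q(\Omega)$, and pointwise a.e. Convexity of $u \mapsto [u]_K^p$ gives $\liminf_n [u_{g_n}]_K^p \ge [v]_K^p$; the growth bound $(h_2)$ together with $L^q$-convergence and a Vitali/dominated-convergence argument gives $\int_\Omega H(x, u_{g_n})\,dx \to \int_\Omega H(x,v)\,dx$; and the term $\int_\Omega g_n u_{g_n}\,dx$ converges to $\int_\Omega g v\,dx$ by the same Hölder splitting used in Lemma \ref{esc} (writing $g_n u_{g_n} - g v = g_n(u_{g_n} - v) + (g_n - g)v$, where the first piece is controlled in $L^1$ by $M\|u_{g_n} - v\|_1 \to 0$ and the second vanishes by weak* convergence against $v \in L^1(\Omega)$). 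Combining these, $\limsup_n \Phi(g_n) = \limsup_n E(g_n, u_{g_n}) \le \int_\Omega g v\,dx - \int_\Omega H(x,v)\,dx - [v]_K^p/p = E(g,v) \le \Phi(g)$. Together with the liminf bound this yields $\Phi(g_n) \to \Phi(g)$; moreover $E(g,v) = \Phi(g)$ forces $v = u_g$ by uniqueness of the maximizer, so the limit is subsequence-independent and the full sequence converges.

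The main obstacle is the handling of the coupling term $\int_\Omega g_n u_{g_n}\,dx$: neither factor converges strongly in a space paired with the other, so one cannot pass to the limit directly; the resolution is precisely the Hölder splitting above, exploiting that $(g_n)$ is uniformly bounded in $L^\infty$ so that $g_n(u_{g_n}-v) \to 0$ in $L^1$, while $v$ being a fixed $L^1$ function lets $(g_n - g)v \to 0$ by definition of weak* convergence. A secondary technical point is justifying the convergence of $\int_\Omega H(x,u_{g_n})\,dx$: by $(h_2)$, $|H(x,t)| \le C_0(|t| + |t|^q/q)$, so the integrand is dominated along the (a.e.-convergent, $L^q$-bounded) subsequence by a convergent-in-$L^1$ majorant, and a generalized dominated convergence theorem applies.
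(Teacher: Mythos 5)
Your proposal follows essentially the same route as the paper: the lower bound $\liminf_n\Phi(g_n)\ge\Phi(g)$ via the competitor $u_g$, extraction of a weakly convergent subsequence of solutions, weak lower semicontinuity of $[\cdot]_K^p$, the H\"older splitting for $\int_\Omega g_nu_n\,dx$, convergence of the $H$-term via $(h_2)$, and identification $v=u_g$ by uniqueness of the maximizer. The one genuinely different step is how you bound $(u_{g_n})$ in $\w$: you use coercivity of $E(g_n,\cdot)$ directly, whereas the paper tests the weak formulation of \eqref{dpg} with $u_n$; both work, and yours is arguably more economical since it never touches the Euler--Lagrange equation. However, your justification of coercivity contains a small error: $H\ge 0$ is \emph{false} in general, since $h\ge 0$ only gives $H(x,t)=\int_0^t h(x,\tau)\,d\tau\le 0$ for $t<0$, and (as the paper remarks) nothing is known about the sign of $u_g$, so you cannot simply discard $-\int_\Omega H(x,u)\,dx$. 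The repair is immediate and already in your own text: $(h_2)$ gives $|H(x,t)|\le C_0\bigl(|t|+|t|^q/q\bigr)$, hence $E(g_n,u)\le C[u]_K+C[u]_K^q-[u]_K^p/p$ by the embeddings $\w\hookrightarrow L^1(\Omega)$, $L^q(\Omega)$, and since $q<p$ the boundedness of $\Phi(g_n)$ from below (e.g.\ $\Phi(g_n)\ge E(g_n,0)=0$) still forces $[u_{g_n}]_K\le C$. With that correction the argument is complete.
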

\begin{proof}
Let $(g_n)$ be a sequence in $\gc$ s.t.\ $g_n\wc g$, and denote $u_n=u_{g_n}$, $u=u_g$. By \eqref{phi}, for all $n\in\N$ we have
\begin{align*}
\Phi(g_n) &= E(g_n,u_n) \\
&\ge E(g_n,u) \\
&= E(g,u)+\int_\Omega(g_n-g)u\,dx \\
&= \Phi(g)+\int_\Omega(g_n-g)u\,dx.
\end{align*}
Passing to the limit as $n\to\infty$ and using weak* convergence, we get
\beq\label{dsc1}
\liminf_n\Phi(g_n) \ge \Phi(g)+\lim_n\int_\Omega(g_n-g)u\,dx = \Phi(g).
\eeq
From \eqref{dpg} with datum $g_n$ and solution $u_n$, multiplying by $u_n$ again, we get for all $n\in\N$
\beq\label{dsc2}
\int_\Omega\big[g_n-h(x,u_n)\big]u_n\,dx = [u_n]_K^p.
\eeq
Since $\|g_n\|_\infty\le M$ and by the continuous embedding $\w\hookrightarrow L^1(\Omega)$, we have
\[\Big|\int_\Omega g_nu_n\,dx\Big| \le C[u_n]_K,\]
with $C>0$ independent of $n$. Also, by $(h_2)$ and the continuous embedding $\w\hookrightarrow L^q(\Omega)$, we have
\[\Big|\int_\Omega h(x,u_n)u_n\,dx\Big| \le C\int_\Omega\big[|u_n|+|u_n|^q\big]\,dx \le C[u_n]_K+C[u_n]_K^q.\]
So \eqref{dsc2} implies for all $n\in\N$
\[[u_n]_K^p \le C[u_n]_K+C[u_n]_K^q.\]
Recalling that $q<p$, we deduce that $(u_n)$ is bounded in $\w$. Passing to a subsequence, we have $u_n\rightharpoonup v$ in $\w$, $u_n\to v$ in $L^p(\Omega)$, and $u_n(x)\to v(x)$ for a.e.\ $x\in\Omega$, as $n\to\infty$. By convexity we have
\[\liminf_n\,[u_n]_K^p \ge [v]_K^p.\]
Reasoning as in Lemma \ref{esc} we find
\beq\label{dsc3}
\lim_n\,\int_\Omega g_nu_n\,dx = \int_\Omega gv\,dx.
\eeq
Finally, we have
\beq\label{dsc4}
\lim_n\,\int_\Omega H(x,u_n)\,dx = \int_\Omega H(x,v)\,dx.
\eeq
Indeed, applying $(h_2)$, Lagrange's rule, and H\"older's inequality, we get for all $n\in\N$
\begin{align*}
\int_\Omega\big|H(x,u_n)-H(x,v)\big|\,dx &\le C\int_\Omega\big[1+|u_n|^{q-1}+|v|^{q-1}\big]|u_n-v|\,dx \\
&\le C\|u_n-v\|_1+C\big[\|u_n\|_q^{q-1}+\|v\|_q^{q-1}\big]\|u_n-v\|_q,
\end{align*}
and the latter tends to $0$ as $n\to\infty$, by the continuous embeddings of $L^p(\Omega)$ into $L^1(\Omega)$, $L^q(\Omega)$, respectively, thus proving \eqref{dsc4}.
\vskip2pt
\noindent
Next, we start from \eqref{phi} and we apply \eqref{dsc3} and \eqref{dsc4}:
\begin{align*}
\limsup_n\Phi(g_n) &= \limsup_n E(g_n,u_n) \\
&\le \lim_n\,\int_\Omega[g_nu_n-H(x,u_n)\big]\,dx-\liminf_n\frac{[u_n]_K^p}{p} \\
&\le \int_\Omega\big[gv-H(x,v)\big]\,dx-\frac{[v]_K^p}{p} = E(g,v),
\end{align*}
and the latter does not exceed $\Phi(g)$, so
\beq\label{dsc5}
\limsup_n\Phi(g_n) \le \Phi(g).
\eeq
Comparing \eqref{dsc1} and \eqref{dsc5}, we have $\Phi(g_n)\to\Phi(g)$, which concludes the proof.
\end{proof}

\noindent
By Lemma \ref{dsc}, $\Phi$ has both a minimizer and a maximizer over $\gc$. Next we prove strict convexity:

\begin{lemma}\label{dcv}
The functional $\Phi$ is strictly convex in $\gc$.
\end{lemma}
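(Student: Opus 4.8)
The plan is to mirror the proof of Lemma \ref{ecv}, exploiting that by \eqref{phi} the functional $\Phi$ is a pointwise supremum of functionals which are \emph{affine} in $g$. Indeed, for fixed $u\in\w$ the map
\[g \mapsto E(g,u) = \int_\Omega gu\,dx - \int_\Omega H(x,u)\,dx - \frac{[u]_K^p}{p}\]
differs from the linear functional $g\mapsto\int_\Omega gu\,dx$ only by a constant independent of $g$, hence it is affine on $\gc$. Since a supremum of affine functionals is convex, given $g_1,g_2\in\gc$, $\tau\in(0,1)$ and $g_\tau=(1-\tau)g_1+\tau g_2$ — which lies in $\gc$ by convexity of $\gc$, see Subsection \ref{ss21} — one has for every $u\in\w$
\[E(g_\tau,u) = (1-\tau)E(g_1,u)+\tau E(g_2,u) \le (1-\tau)\Phi(g_1)+\tau\Phi(g_2),\]
and taking the supremum over $u$ yields $\Phi(g_\tau)\le(1-\tau)\Phi(g_1)+\tau\Phi(g_2)$.

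To upgrade this to \emph{strict} convexity I would argue by contradiction, assuming $g_1\neq g_2$ and $\Phi(g_\tau)=(1-\tau)\Phi(g_1)+\tau\Phi(g_2)$ for some $\tau\in(0,1)$. Evaluating the affine identity above at the maximizer $u_{g_\tau}$ of $E(g_\tau,\cdot)$ and using \eqref{phi}, I get
\[(1-\tau)E(g_1,u_{g_\tau})+\tau E(g_2,u_{g_\tau}) = \Phi(g_\tau) = (1-\tau)\Phi(g_1)+\tau\Phi(g_2).\]
Since $E(g_i,u_{g_\tau})\le\Phi(g_i)$ for $i=1,2$, this equality forces $E(g_1,u_{g_\tau})=\Phi(g_1)$ and $E(g_2,u_{g_\tau})=\Phi(g_2)$; recalling that $u_{g_i}$ is the \emph{unique} maximizer of $E(g_i,\cdot)$ over $\w$ (as quoted before \eqref{phi}), we conclude $u_{g_1}=u_{g_2}=u_{g_\tau}=:u$.

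Finally I would insert this common function $u$ into the weak formulation of \eqref{dpg} with data $g_1$ and $g_2$: for every $\varphi\in\w$,
\[\int_\Omega g_1\varphi\,dx = \langle\lk u,\varphi\rangle+\int_\Omega h(x,u)\varphi\,dx = \int_\Omega g_2\varphi\,dx,\]
so $\int_\Omega(g_1-g_2)\varphi\,dx=0$ for all $\varphi\in\w$, in particular for all $\varphi\in C^\infty_c(\Omega)$, which gives $g_1=g_2$ a.e.\ in $\Omega$, a contradiction. The only delicate point is the passage from equality in the convexity inequality to equality of the two solutions; everything else reduces either to the sup-of-affine observation or to the uniqueness of solutions of \eqref{dpg}. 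Note that, unlike in Lemma \ref{ecv}, no normalization is required and no information on the sign of $u$ is needed, since the $\lk u$ and $h(x,u)$ contributions cancel outright and we never divide by $u$.
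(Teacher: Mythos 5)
Your proposal is correct and follows essentially the same route as the paper's proof: convexity via the supremum of affine functionals, strict convexity by contradiction through evaluating the affine identity at $u_{g_\tau}$, invoking uniqueness of the maximizer of $E(g_i,\cdot)$ to get $u_{g_1}=u_{g_2}=u_{g_\tau}$, and then testing the weak formulation of \eqref{dpg} to conclude $g_1=g_2$. The only (harmless) addition is your explicit remark about restricting the test functions to $C^\infty_c(\Omega)$.
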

\begin{proof}
Convexity of $\Phi$ follows as in Lemma \ref{ecv}. To prove strict convexity, we argue by contradiction. Let $g_1,g_2\in\gc$ be s.t.\ $g_1\neq g_2$, set for all $\tau\in(0,1)$
\[g_\tau = (1-\tau)g_1+\tau g_2 \in \gc,\]
and assume that for some $\tau\in(0,1)$
\[\Phi(g_\tau) = (1-\tau)\Phi(g_1)+\tau\Phi(g_2).\]
As usual, set $u_i=u_{g_i}$ ($i=1,2$) and $u_\tau=u_{g_\tau}$. By linearity of $E(g,u_\tau)$ in $g$ and \eqref{phi}, the relation above rephrases as
\[(1-\tau)E(g_1,u_\tau)+\tau E(g_2,u_\tau) = E(g_\tau,u_\tau) = (1-\tau)E(g_1,u_1)+\tau E(g_2,u_2).\]
Recalling that $E(g_i,u_\tau)\le E(g_i,u_i)$ ($i=1,2$) and the uniqueness of the maximizer in \eqref{phi}, we deduce $u_1=u_2=u_\tau$. Now test \eqref{dpg} with an arbitrary $\varphi\in\w$:
\[\int_\Omega g_1\varphi\,dx = \langle\lk(u_\tau),\varphi\rangle+\int_\Omega h(x,u_\tau)\varphi\,dx = \int_\Omega g_2\varphi\,dx.\]
So we have $g_1=g_2$ a.e.\ in $\Omega$, a contradiction. Thus, $\Phi$ is strictly convex.
\end{proof}

\noindent
The last property we need is differentiability:

\begin{lemma}\label{dgd}
The functional $\Phi$ is G\^ateaux differentiable in $\gc$, and for all $g,k\in\gc$
\[\langle\Phi'(g),k-g\rangle = \int_\Omega (k-g)u_g\,dx.\]
\end{lemma}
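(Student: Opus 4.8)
The plan is to mimic the argument of Lemma \ref{egd}, exploiting that the map $g\mapsto u_g$ is continuous from $\gc$ (with the sequential weak* topology) to $L^p(\Omega)$ (strong), and that $E(g,\cdot)$ is linear in $g$. First I would establish the continuity claim: given a sequence $(g_n)$ in $\gc$ with $g_n\wc g$, set $u_n=u_{g_n}$, $u=u_g$; the bound $[u_n]_K^p\le C[u_n]_K+C[u_n]_K^q$ with $q<p$ obtained in Lemma \ref{dsc} shows $(u_n)$ is bounded in $\w$, so up to a subsequence $u_n\rightharpoonup v$ in $\w$, $u_n\to v$ in $L^p(\Omega)$ and a.e.\ in $\Omega$. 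Running the chain of inequalities from the end of Lemma \ref{dsc}, $\Phi(g)=\lim_n E(g_n,u_n)\le E(g,v)\le\Phi(g)$, so $v$ maximizes $E(g,\cdot)$ over $\w$; by uniqueness of the maximizer $v=u_g=u$. Hence the whole sequence satisfies $u_n\to u$ in $L^p(\Omega)$, i.e.
\[
\lim_n\int_\Omega|u_n-u|^p\,dx=0.
\]

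Next I would derive the two-sided estimate playing the role of \eqref{egd2}. Using that $u_n$ maximizes $E(g_n,\cdot)$ and $u$ maximizes $E(g,\cdot)$, together with the identity $E(g',w)=E(g,w)+\int_\Omega(g'-g)w\,dx$, one gets for all $n\in\N$
\[
\Phi(g)+\int_\Omega(g_n-g)u\,dx\le\Phi(g_n)=E(g,u_n)+\int_\Omega(g_n-g)u_n\,dx\le\Phi(g)+\int_\Omega(g_n-g)u_n\,dx.
\]
Now fix $g,k\in\gc$ with $g\neq k$ and a sequence $\tau_n\to 0^+$ in $(0,1)$; by convexity of $\gc$, $g_n:=g+\tau_n(k-g)\in\gc$ and $g_n\wc g$. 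Substituting into the estimate above and dividing by $\tau_n>0$ gives
\[
\int_\Omega(k-g)u\,dx\le\frac{\Phi(g+\tau_n(k-g))-\Phi(g)}{\tau_n}\le\int_\Omega(k-g)u_n\,dx.
\]
Since $u_n\to u$ in $L^p(\Omega)$ and $(k-g)\in L^\infty(\Omega)\subset L^{p'}(\Omega)$, the right-hand side tends to $\int_\Omega(k-g)u\,dx$, so by the squeeze the difference quotient converges to $\int_\Omega(k-g)u\,dx$. As $u\in L^\infty(\Omega)\subset L^1(\Omega)\subset L^\infty(\Omega)^*$ and the sequence $(\tau_n)$ was arbitrary, $\Phi$ is G\^ateaux differentiable at $g$ with $\langle\Phi'(g),k-g\rangle=\int_\Omega(k-g)u_g\,dx$, as claimed.

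The main obstacle is the strong $L^p$-convergence $u_n\to u_g$: unlike in the eigenvalue case there is no normalization, and one must extract it from the variational characterization \eqref{phi} rather than from an explicit normalization constant. The key point is that weak lower semicontinuity of $w\mapsto[w]_K^p$ combined with the convergences \eqref{dsc3}, \eqref{dsc4} forces equality in the chain $\Phi(g)\le E(g,v)\le\Phi(g)$, which pins down $v$ as the unique maximizer and thus upgrades weak to strong convergence; this is exactly where hypotheses $(h_1)$, $(h_2)$ (subcriticality and $q<p$) enter. Once this is in place, linearity of $E$ in $g$ makes the differentiation step entirely routine.
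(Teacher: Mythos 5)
Your proposal is correct and follows essentially the same route as the paper's own proof: strong $L^p$-convergence of $u_{g_n}\to u_g$ extracted from boundedness in $\w$, weak lower semicontinuity of $[\cdot]_K^p$, and uniqueness of the maximizer of $E(g,\cdot)$, followed by the same two-sided estimate and squeeze on the difference quotients. The only (standard, harmless) gloss is the passage from subsequential to full-sequence convergence of $(u_n)$, which the paper also leaves implicit.
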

\begin{proof}
Let $(g_n)$ be a sequence in $\gc$ s.t.\ $g_n\wc g$, and let $u_n=u_{g_n}$, $u=u_g$. From Lemma \ref{dsc} we know that $\Phi(g_n)$ tends to $\Phi(g)$, i.e.,
\[\lim_n E(g_n,u_n) = E(g,u).\]
We further claim that
\beq\label{dgd1}
\lim_n\int_\Omega|u_n-u|^p\,dx = 0.
\eeq
Indeed, reasoning as in Lemma \ref{dsc} we see that $(u_n)$ is bounded in $\w$. So, passing to a subsequence, we have $u_n\rightharpoonup v$ in $\w$, $u_n\to v$ in $L^p(\Omega)$, and $u_n(x)\to v(x)$ for a.e.\ $x\in\Omega$, as $n\to\infty$. Again as in Lemma \ref{dsc} it is seen that
\[\liminf_n[u_n]_K^p \ge [v]_K^p\]
and
\[\lim_n\int_\Omega\big[g_nu_n-H(x,u_n)\big]\,dx = \int_\Omega\big[gv-H(x,v)\big]\,dx.\]
Summing up, we get
\[E(g,u) = \lim_n E(g_n,u_n) \le E(g,v),\]
which implies $u=v$ by uniqueness of the maximizer in \eqref{phi}. So $u_n\to u$ in $L^p(\Omega)$, which yields \eqref{dgd1}. In addition, reasoning as in Lemma \ref{egd}, for all $n\in\N$ we have
\beq\label{dgd2}
\Phi(g)+\int_\Omega (g_n-g)u\,dx \le \Phi(g_n) \le \Phi(g)+\int_\Omega(g_n-g)u_n\,dx.
\eeq
Now fix $k\in\gc\setminus\{g\}$ and a sequence $(\tau_n)$ in $(0,1)$ s.t.\ $\tau_n\to 0$ as $n\to\infty$. Set
\[g_n = g+\tau_n(k-g) \in \gc,\]
so that $g_n\wc g$. By \eqref{dgd2} with such choice of $g_n$, we have for all $n\in\N$
\[\int_\Omega(k-g)u\,dx \le \frac{\Phi(g+\tau_n(k-g))-\Phi(g)}{\tau_n} \le \int_\Omega(k-g)u_n\,dx.\]
Passing to the limit for $n\to\infty$, and noting that by \eqref{dgd1} we have in particular $u_n\to u$ in $L^1(\Omega)$, we get
\[\lim_n \frac{\Phi(g+\tau_n(k-g))-\Phi(g)}{\tau_n} = \int_\Omega(k-g)u\,dx.\]
By arbitrariness of $(\tau_n)$, and noting that $u\in L^1(\Omega)\subset L^\infty(\Omega)^*$, we see that $\Phi$ is G\^ateaux differentiable at $g$ with
\[\langle\Phi'(g),k-g\rangle = \int_\Omega(k-g)u\,dx,\]
which concludes the proof.
\end{proof}

\noindent
We can now prove our optimization result, with a similar argument as in Section \ref{sec3}:
\vskip4pt
\noindent
{\em Proof of Theorem \ref{max}.} By Lemma \ref{dsc} and sequential weak* compactness of $\gc$, there exists $\bar g\in\gc$ s.t.\ for all $g\in\gc$
\[\Phi(\bar g) \ge \Phi(g).\]
Set $\bar u=u_{\bar g}\in\w$, then by Lemma \ref{dgd} we have for all $k\in\gc\setminus\{\bar g\}$
\[\langle\Phi'(\bar g),k-\bar g\rangle = \int_\Omega(k-\bar g)\bar u\,dx.\]
Since $\bar u\in L^1(\Omega)$, by Lemma \ref{lin} \ref{lin1} there exists $\hat g\in\g$ s.t.\ for all $g\in\gc$
\[\int_\Omega\hat g\bar u\,dx \ge \int_\Omega g\bar u\,dx,\]
in particular
\beq\label{max4}
\int_\Omega\hat g\bar u\,dx \ge \int_\Omega\bar g\bar u\,dx.
\eeq
By convexity of $\Phi$ (Lemma \ref{dcv}) and \eqref{max4}, we have
\[\Phi(\hat g) \ge \Phi(\bar g)+\int_\Omega(\hat g-\bar g)\bar u\,dx \ge \Phi(\bar g).\]
Therefore, $\hat g\in\g$ is a maximizer of $\Phi$ over $\gc$, which proves \ref{max1}. In fact we have $\hat g=\bar g$, otherwise by strict convexity (Lemma \ref{dcv} again) and \eqref{max4} we would have
\[\Phi(\hat g) > \Phi(\bar g)+\int_\Omega(\hat g-\bar g)\bar u\,dx \ge \Phi(\bar g),\]
against maximality of $\bar g$. Thus, any maximizer of $\Phi$ over $\gc$ actually lies in $\g$, which proves \ref{max2}. Finally, let $\hat g\in\g$ be a maximizer of $\Phi$ and set $\hat u=u_{\hat g}\in\w$. As we have seen before, $\hat g$ is the only maximizer in $\gc$ for the linear functional
\[g \mapsto \int_\Omega g\hat u\,dx,\]
hence by Lemma \ref{lin} \ref{lin2} there exists a nondecreasing map $\eta:\R\to\R$ s.t.\ $\hat g=\eta\circ\hat u$ in $\Omega$, thus proving \ref{max3}. \qed
\vskip4pt
\noindent
{\bf Acknowledgement.} The first author is a member of GNAMPA (Gruppo Nazionale per l'Analisi Matematica, la Probabilit\`a e le loro Applicazioni) of INdAM (Istituto Nazionale di Alta Matematica 'Francesco Severi'), and is partially supported by the research project {\em Problemi non locali di tipo stazionario ed evolutivo} (GNAMPA, CUP E53C23001670001) and the research project {\em Studio di modelli nelle scienze della vita} (UniSS DM 737/2021 risorse 2022-2023).

\end{document}